
\documentclass[]{interact}

\usepackage{epstopdf}
\usepackage[caption=false]{subfig}

\usepackage[numbers,sort&compress]{natbib}
\bibpunct[, ]{[}{]}{,}{n}{,}{,}
\makeatletter
\def\NAT@def@citea{\def\@citea{\NAT@separator}}
\makeatother

\theoremstyle{plain}
\newtheorem{theorem}{Theorem}[section]
\newtheorem{lemma}[theorem]{Lemma}
\newtheorem{corollary}[theorem]{Corollary}
\newtheorem{proposition}[theorem]{Proposition}

\theoremstyle{definition}
\newtheorem{definition}[theorem]{Definition}
\newtheorem{example}[theorem]{Example}

\theoremstyle{remark}
\newtheorem{remark}{Remark}

\begin{document}


\title{On the Maximal Monotone Operators in Hadamard Spaces}

\author{
\name{Ali Moslemipour\textsuperscript{a}, Mehdi Roohi\textsuperscript{b}\thanks{CONTACT Mehdi Roohi. Email: m.roohi@gu.ac.ir} and Jen-Chih Yao\textsuperscript{c}}
\affil{\textsuperscript{a} Department of Mathematics, Aliabad Katoul Branch, Islamic Azad University, Aliabad Katoul, Iran}
\affil{\textsuperscript{b} Department of Mathematics, Faculty of Sciences,  Golestan University,
Gorgan, Golestan, Iran}
\affil{\textsuperscript{c} Research Center for Interneural Computing, China Medical University Hospital,
China Medical University, Taichung, Taiwan}
}
\maketitle

\begin{abstract}
In this paper, some topics of monotone operator theory in the setting
 of Hadamard spaces are investigated. For a fixed element $p$ in a Hadamard space $X$, the notion of $p$-Fenchel conjugate is introduced and a  type of the Fenchel-Young inequality is proved. Moreover, we examine the $p$-Fitzpatrick transform and its main properties for monotone set-valued operators in Hadamard spaces. Furthermore, some relations between maximal monotone operators and certain classes of proper, convex,  l.s.c. extended real-valued functions on $X\times X^{\scalebox{0.7}{$^{\lozenge}$}}$, are given.
\end{abstract}

\begin{keywords}
Monotone operator; Fenchel conjugate; Fenchel-Young inequality; Fitzpatrick transform;  Hadamard space; flat Hadamard space
\end{keywords}

\section{Introduction}

Recently, the concept of Hadamard spaces has gained considerable attention. They have been applied to several areas of mathematics, such as convex optimization, fixed point theory, ergodic theory, geometric group theory, computational biology, and also bio-informatics. There are many generalizations of Hilbert spaces with different points of views. A nonlinear generalization of Hilbert spaces are Hadamard spaces.
\newcommand{\cat}{\mathrm{CAT}}
\newcommand*{\smalllozenge}{{\scalebox{0.7}{$^{\lozenge}$}}}
\newcommand*{\loz}{{\scalebox{0.45}{${\lozenge}$}}}

More precisely, Hadamard spaces are complete metric spaces with a non-positive curvature, also known as complete $\cat(0)$ spaces.  Classical real hyperbolic $n$-spaces, $\mathbb{R}$-trees, Hadamard manifolds, Euclidean buildings, nonlinear Lebesgue spaces, Hilbert balls, and complete simply connected Riemannian manifolds of a non-positive sectional curvature are important examples of Hadamard spaces. For more details and other examples, see {\rm{\rm\cite[Chapter II.1, 1.15]{BridsonHaefliger}}}.

It is well known that, the notion of non-positive curvature spaces were mentioned by
 J. Hadamard and E. Cartan in the 1920's.
In 1950, H. Busemann and A.D. Aleksandrov generalized  the concept of geodesic metric spaces based on the concept of manifolds with a non-positive sectional curvature.  Gromov  suggested the notation $\cat(0)$ for a non-positive curvature geodesic metric space. The letters C, A and T in $\cat(0)$ stand for  Cartan, Aleksandrov and Toponogov, respectively.

 Monotone operator theory plays an important role in several different areas of pure and applied mathematics such as nonlinear analysis, nonlinear functional analysis, convex analysis and
convex optimization theory, for more details see \cite{Bauschke2017,Borwein} and the references cited therein. The first application of the concept of
monotone operator was made implicitly by M. Golomb
\cite{Golomb1935},
but the notion of monotone operators was introduced  independently
by Kacurovskii \cite{Kacurovski1960}, Zarantonello \cite{Zarantonello1960} and Minty \cite{Minty1962}. During these years, the theory of monotone operators has found many generalizations and applications, for instance, see \cite{AYMY2022,AAJRR2022,[m],AlizadehRoohi,AliHadjRoohi2012,[71],[89],[1],MPP2015,NaLucThe,[rr],[139],Verona1,Y1994}.

Let $E$ be a Banach space and let $E^*$ be its linear dual space. Representing the notion of maximal monotone set-valued operators from $E$ to $E^*$, in terms of subdifferentials of the skew-symmetric saddle functions on $E\times E$ were introduced and investigated by Krauss \cite{Krauss85a,Krauss85b,Krauss86}. Then Fitzpatrick  \cite{Fitzpatrick} suggested a new way to represent maximal monotone operators by subdifferentials of a certain class of convex functions defined on $E\times E^*$. Martinez-Legaz and Th\'{e}ra \cite{Martinez-LegazThera} and
Burachik and  Svaiter \cite{BurachikSvaiter}, independently, rediscovered Fitzpatrick transform. Some of recent advances on monotone operator theory and the Fitzpatrick transform can be found in
\cite{Borwein,EliasMartinez-Legaz,Martinez-LegazSvaiter}.

The concept of monotone operators from a Hadamard space $X$ to its dual, $X^*$, was introduced and considered in \cite{KakavandiAmini,KhatibzadehRanjbar2017}.
Recently, the notions of monotone relations and maximal monotone relations from a Hadamard space to its linear dual space were introduced and their basic properties were investigated in \cite{MRMA1,MRMA2}.
The  Fitzpatrick transform of monotone relations in Hadamard spaces was introduced in \cite{MRMA2}. There it was shown that the Fitzpatrick transform of a certain class of monotone relations is proper, convex and lower semi-continuous.

Motivated by the above statements and papers, some relations between maximal monotone operators and certain classes of proper, convex,  l.s.c. extended real-valued functions on $X\times X^{\scalebox{0.7}{$^{\lozenge}$}}$, are given, where $X$ is a Hadamard space and $X^{\scalebox{0.7}{$^{\lozenge}$}}$ is its linear dual space.

The next sections of this manuscript are organized as follows. In section \ref{pre}, some basic definitions and results are collected, which are used directly or indirectly in the following sections. More precisely, the notions of geodesic spaces, CN-inequality, $\cat(0)$-spaces, Hadamard spaces, quasilinearization in abstract metric spaces, dual space of a Hadamard space, linear dual space of a Hadamard space and some notions related to convex analysis in Hadamard spaces, are given. Section \ref{flat} devoted to the notion of flat Hadamard spaces. A characterization result together with some new examples are considered. The  most important goal of section \ref{p-Fenchel} is to define the concept of $p$-Fenchel conjugate and prove the $p$-Fenchel-Young inequality. Finally, section \ref{p-Fitzpatrick transform} is devoted to prove a representation result for maximal monotone operators in terms of certain proper, convex,  l.s.c. extended real-valued functions on $X\times X^{\scalebox{0.7}{$^{\lozenge}$}}$. In this section, the notion of $p$-Fitzpatrick transform for a set-valued map from a Hadamard space $X$ to its linear dual $X^{\scalebox{0.7}{$^{\lozenge}$}}$ is introduced and considered. Then some characterization results for maximal monotonicity of set-valued maps from a Hadamard space $X$ to its linear dual $X^{\scalebox{0.7}{$^{\lozenge}$}}$ are given.

\newcommand{\Lip}{\mathrm{Lip}}
\newcommand{\Ran}{\mathrm{Range}}
\newcommand{\Dom}{\mathrm{Dom}}
\newcommand{\dom}{\mathrm{dom}}
\newcommand{\gra}{\mathrm{gra}}
\newcommand{\spa}{\mathrm{span}}
\newcommand{\langg}{\mathrm{\langle\!\langle}}
\newcommand{\rangg}{\mathrm{\rangle\!\rangle}}
\newcommand{\Fix}{\mathrm{Fix}}
\newcommand{\co}{\mathrm{co}}
\newcommand{\Id}{\mathrm{Id}}
\newcommand{\argmin}{\mathrm{argmin}}
\newcommand{\epi}{\mathrm{epi}}
\newcommand{\Int}{\mathrm{Int}}
\def\N {{\mathbb N}}
\def\chap {{\langle\!\!\langle}}
\def\R {{\mathbb R}}
\newcommand*{\shp}{{\scalebox{0.55}{$\#$}}}
\def\ssmath#1{\text{\scalebox{0.7}{$#1$}}}
\def\ssfrac#1#2{\ssmath{\frac{#1}{#2}}}
\def\smath#1{\text{\scalebox{0.8}{$#1$}}}
\def\sfrac#1#2{\smath{\frac{#1}{#2}}}
\def\mmath#1{\text{\scalebox{0.95}{$#1$}}}
\def\mfrac#1#2{\mmath{\frac{#1}{#2}}}
\allowdisplaybreaks

\section{Preliminaries}\label{pre}

\vspace*{-0.15cm}
 As a prelude to this paper, we present some preliminary and basic results about geodesic and Hadamard spaces. More details can be found in  \cite{BridsonHaefliger,Bacak2014,Papadupolos,KakavandiAmini,ChaipunyaKumam} and the references cited therein.

A metric space $(X,d)$  is said to be a \textit{geodesic space} if any two points of it can be connected by a geodesic path. Recall that an isometry $c:[0,1] \rightarrow X$ with $c(0)=x$ to $c(1)=y$ is a \textit{geodesic path} from $x$ to $y$. Any geodesic space $(X,d)$ satisfying the following \textit{CN-inequality} is called a \textit{$\cat(0)$ space},
\begin{equation}\label{cn}
d(z,c(t))^2 \leq (1-t)d(z,c(0))^2 +td(z,c(1))^2-t(1-t)d(c(0),c(1))^2,
\end{equation}
for each $x,y,z \in X$, $t \in [0,1]$ and $c$, a geodesic path from $x$ to $y$.

From now on, in a $\cat(0)$ space $(X,d)$, we set $(1-t)x \oplus ty:=c(t)$, where $x,y \in X$, $t \in [0,1]$ and $c:[0,1] \rightarrow X$ is a geodesic path. Moreover, $[x,y]$  denote  the image of the geodesic path $c$. Indeed,
\[[x,y]=\big\{(1-t)x \oplus ty: t \in [0,1]\big\}.\]
We use the term \textit{Hadamard space} for a complete $\cat(0)$  space.

The notion of the \textit{dual space} of a Hadamard space $(X,d)$ has been introduced in \cite{KakavandiAmini}, which is based on the \textit{quasilinearization} of an abstract metric space in \cite{BergNikolaev}.

Let $(X,d)$ be a Hadamard space, for each $x, y\in X$,  $\overrightarrow{xy}:=(x, y)$ is called a bound vector. The symbol $\mathbf{0}_x:=\overrightarrow{xx}$ will denotes the \textit{zero bound vector} at $x\in X$. We identify two bound vectors $-\overrightarrow{xy}$ and $\overrightarrow{yx}$. In $X^2$, two bound vectors $(x, y)$ and
$(u, v)$ are said to be \textit{compatible} if  $y=u$, in this case, we define
 \[\overrightarrow{xu}+\overrightarrow{uv}:=\overrightarrow{xv}.\]
The \textit{quasilinearization map} is defined by
\begin{align}\label{inb}
\langle \cdot,\cdot\rangle &: X^2 \times X^2 \rightarrow \mathbb{R},\\ \nonumber
\langle \overrightarrow{xy}, \overrightarrow{uv} \rangle & :=\sfrac{1}{2}\big(d(x,v)^2+d(y,u)^2-d(x,u)^2-d(y,v)^2\big), ~~(x,y,u,v,\in X).
\end{align}

Throughout this paper, $\mathcal{H}$ stands for a real Hilbert space  $(\mathcal{H},\langg{\cdot}\,|\,{\cdot}\rangg)$, where  $\langg{\cdot}\,|\,{\cdot}\rangg$ is an  inner product on  $\mathcal{H}$.
\begin{remark}\cite{KakavandiAmini,BergNikolaev}\label{4mquasilintoinpro}
It can be shown that the quasilinearization map of a real Hilbert space $(\mathcal{H},\langg{\cdot}\,|\,{\cdot}\rangg)$ as a $\cat(0)$ space, is the same as its inner product. Indeed, for each $a,b,c,d \in \mathcal{H}$,
\begin{align*}
\langle \overrightarrow{ab},\overrightarrow{cd} \rangle &=\sfrac{1}{2}\big(\|a-d\|^2+\|b-c\|^2-\|a-c\|^2-\|b-d\|^2\big)\\
&=\sfrac{1}{2}\big(\|a\|^2 -2\langg{a\,}|\,{d}\,\rangg + \|d\|^2 + \|b\|^2 -2\langg{b\,}|{\,c}\rangg + \|c\|^2\\
&\hspace{0.4cm}- \|a\|^2 +2\langg{a\,}|{\,c}\rangg - \|c\|^2-\|b\|^2 +2\langg{b\,}|{\,d}\rangg - \|d\|^2\big)\\
&=\langg{b-a\,}|{\,d-c}\rangg,
\end{align*}
as claimed.
\end{remark}
A metric space $(X,d)$ satisfies the \textit{Cauchy-Schwarz inequality} if 
\begin{equation} \label{csineq}
\langle \overrightarrow{xy}, \overrightarrow{uv} \rangle \leq d(x,y)d(u,v),~\forall x,y,u,v\in X,
\end{equation}
or equivalently,
\begin{equation*}
|\langle \overrightarrow{xy}, \overrightarrow{uv} \rangle| \leq d(x,y)d(u,v).
\end{equation*}

It is known that a geodesic metric space $(X,d)$ is a $\cat(0)$ space if and only if it satisfies the Cauchy-Schwarz inequality (see {\rm \cite[Corollary 3]{BergNikolaev}}).

Let $C(X,\mathbb{R})$ (resp. $\Lip(X,\mathbb{R})$) denotes the set of all continuous (resp. Lipschitz) real-valued functions on $X$.
Given a Hadamard space $(X,d)$, define the map (\cite{KakavandiAmini})
\begin{align*}
\Theta : &\mathbb{R} \times X^2 \rightarrow C(X,\mathbb{R})\\
& (t,a,b) \mapsto \Theta (t,a,b)x=t \langle \overrightarrow{ab}, \overrightarrow{ax} \rangle, ~~(a,b,x \in X , t \in \mathbb{R}).
\end{align*}
It follows from the Cauchy-Schwarz inequality that $\Theta (t,a,b)$ is a Lipschitz function with the Lipschitz semi-norm
\begin{equation}\label{lips}
L(\Theta (t,a,b) )=|t|d(a,b) ,~~(a,b \in X,~t \in \mathbb{R}).
\end{equation}
Recall that the \textit{Lipschitz semi-norm} $L:\Lip(X,\mathbb{R})\rightarrow \mathbb{R}$ is defined by
\begin{align*}
L(\varphi):=&\sup\Big\{ \frac{|\varphi(x)-\varphi(y)|}{d(x,y)}:x,y \in X, x\neq y\Big\}\\
=&\sup\Big\{ \frac{\varphi(x)-\varphi(y)}{d(x,y)}:x,y \in X, x\neq y\Big\}.
\end{align*}
The \textit{pseudometric} $D$ on $\mathbb{R} \times X^2$, induced by  Lipschitz semi-norm (\ref{lips}), is defined by
\begin{align}\label{dinnerd}
D\big((t,a,b),(s,c,d)\big)=L\big(\Theta (t,a,b)-\Theta (s,c,d)\big),~~(a,b,c,d \in X, \,t,s \in \mathbb{R}).
\end{align}
Recall from  \cite{KakavandiAmini} that the pseudometric space $(\mathbb{R} \times X^2,D)$ can be viewed as a subspace of the pseudometric space of all real-valued Lipschitz functions $(\Lip(X,\mathbb{R}), L)$.
\begin{lemma}{\rm\cite[Lemma 2.1]{KakavandiAmini}} Let $X$ be a Hadamard space and $a,b, c, d\in X$. Then
\[ D\big((t,a,b),(s,c,d)\big)=0 ~\Longleftrightarrow ~\big[t\langle \overrightarrow{ab},\overrightarrow{xy} \rangle =s\langle \overrightarrow{cd},\overrightarrow{xy} \rangle,\,\forall\, t, s \in \mathbb{R}, \forall\, x, y\in X\big],\]
where $D$ is as in {\rm(\ref{dinnerd})}.
\end{lemma}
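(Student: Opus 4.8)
The plan is to unravel the definition of $D$ and reduce the whole statement to the observation that a single Lipschitz function is constant. Set $\varphi := \Theta(t,a,b) - \Theta(s,c,d)$, so that by the definition of $\Theta$ we have $\varphi(x) = t\langle \overrightarrow{ab}, \overrightarrow{ax}\rangle - s\langle \overrightarrow{cd}, \overrightarrow{cx}\rangle$ for every $x \in X$. By (\ref{dinnerd}) the condition $D\big((t,a,b),(s,c,d)\big)=0$ is literally the statement $L(\varphi)=0$. Because the second displayed formula for the Lipschitz semi-norm takes the supremum of $\big(\varphi(x)-\varphi(y)\big)/d(x,y)$ over \emph{ordered} pairs $x\neq y$ (hence it also sees the reversed pair and coincides with the supremum of $|\varphi(x)-\varphi(y)|/d(x,y)$), we have $L(\varphi)=0$ if and only if $\varphi(x)=\varphi(y)$ for all $x,y\in X$, i.e. $\varphi$ is constant. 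Everything therefore reduces to rewriting the difference $\varphi(x)-\varphi(y)$ in a usable form.

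The key computational step is an additivity identity for the quasilinearization map. First I would check directly from (\ref{inb}) that $\langle \overrightarrow{pq}, \overrightarrow{uv}\rangle + \langle \overrightarrow{pq}, \overrightarrow{vw}\rangle = \langle \overrightarrow{pq}, \overrightarrow{uw}\rangle$; this is a one-line cancellation of the four squared distances in the definition. Together with the antisymmetry $\langle \overrightarrow{pq}, \overrightarrow{vu}\rangle = -\langle \overrightarrow{pq}, \overrightarrow{uv}\rangle$, which is equally immediate from (\ref{inb}), this yields $\langle \overrightarrow{ab}, \overrightarrow{ax}\rangle - \langle \overrightarrow{ab}, \overrightarrow{ay}\rangle = \langle \overrightarrow{ab}, \overrightarrow{ya}\rangle + \langle \overrightarrow{ab}, \overrightarrow{ax}\rangle = \langle \overrightarrow{ab}, \overrightarrow{yx}\rangle$, and likewise $\langle \overrightarrow{cd}, \overrightarrow{cx}\rangle - \langle \overrightarrow{cd}, \overrightarrow{cy}\rangle = \langle \overrightarrow{cd}, \overrightarrow{yx}\rangle$. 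Substituting into $\varphi$ gives the clean formula $\varphi(x)-\varphi(y) = t\langle \overrightarrow{ab}, \overrightarrow{yx}\rangle - s\langle \overrightarrow{cd}, \overrightarrow{yx}\rangle$, valid for all $x,y\in X$ (here $t$ and $s$ are the fixed scalars carried over from the left-hand side).

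With this identity in hand both implications are formal. For the forward direction, $D=0$ forces $\varphi(x)-\varphi(y)=0$ for all $x,y$, that is $t\langle \overrightarrow{ab}, \overrightarrow{yx}\rangle = s\langle \overrightarrow{cd}, \overrightarrow{yx}\rangle$; relabelling the pair $(x,y)$ gives the asserted equality $t\langle \overrightarrow{ab}, \overrightarrow{xy}\rangle = s\langle \overrightarrow{cd}, \overrightarrow{xy}\rangle$ for all $x,y\in X$. For the converse, that same equality makes every difference $\varphi(x)-\varphi(y)$ vanish, so $\varphi$ is constant and hence $L(\varphi)=D\big((t,a,b),(s,c,d)\big)=0$.

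I expect the only genuine obstacle to be the bookkeeping in the additivity step: one must concatenate the compatible bound vectors in exactly the right order so that the base points $a$ and $c$ cancel and the residual vector is $\overrightarrow{yx}$ rather than $\overrightarrow{xy}$ (this sign/orientation choice is precisely what the final relabelling repairs). Once the difference $\varphi(x)-\varphi(y)$ is brought into the displayed form, the equivalence is purely mechanical.
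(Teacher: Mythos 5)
Your proof is correct. Note, however, that the paper does not prove this lemma at all: it is quoted verbatim from \cite[Lemma 2.1]{KakavandiAmini}, so there is no internal argument to compare against. Your route --- reducing $D\big((t,a,b),(s,c,d)\big)=0$ to the constancy of $\varphi=\Theta(t,a,b)-\Theta(s,c,d)$ via $L(\varphi)=0$, and then using additivity and antisymmetry of the quasilinearization map to get $\varphi(x)-\varphi(y)=t\langle \overrightarrow{ab},\overrightarrow{yx}\rangle-s\langle \overrightarrow{cd},\overrightarrow{yx}\rangle$ --- is the natural and standard one (it is essentially the argument of the cited source), and your reading of $t,s$ as the fixed scalars from the left-hand side correctly repairs the spurious ``$\forall\, t,s\in\mathbb{R}$'' quantifier in the paper's statement.
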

It is easy to check that the following relation on $\mathbb{R} \times X^2$ is an equivalence relation
\[(t,a,b)\sim(s,c,d):\Leftrightarrow D\big((t,a,b),(s,c,d)\big)=0.\]
Therefore, for every $(t,a,b) \in\mathbb{R} \times X^2 $, the equivalence class of $(t,a,b)$ is given by
\[[t\overrightarrow{ab}]:=\big\{s\overrightarrow{cd}: D\big((t,a,b),(s,c,d)\big)=0, s\in\mathbb{R}, c, d\in X\big\}.\]
By the definition of equivalence classes we have $[\overrightarrow{aa}]=[\overrightarrow{bb}]$ for each $a,b \in X$. Now, the set $X^{_{\scalebox{1.025}{*}}}:=\big\{[t\overrightarrow{ab}]:a,b \in X,t \in \mathbb{R}\big\}$  is called the \textit{dual space} of  $X$. The zero element $\mathbf{0}_{X^{_{\scalebox{1.015}{*}}}}:=[t\overrightarrow{aa}]$, is briefly denoted by $\mathbf{0}$, where  $a \in X$ and $t \in \mathbb{R}$. Moreover, for each $a,b \in X$, we have $\langle \mathbf{0}_{X^{_{\scalebox{1.015}{*}}}}, \overrightarrow{ab} \rangle$=0.
 It is easily seen that $X^{_{\scalebox{1.025}{*}}}$ is a metric space equipped with the metric $D$, defined by
\[D\big([t\overrightarrow{ab}], [s\overrightarrow{cd}]\big):=D\big((t,a,b),(s,c,d)\big),~~( t,s \in \mathbb{R},\,a,b,c,d, \in X).\]
Furthermore, $X^{_{\scalebox{1.025}{*}}}$ acts on $X^2$ by
\begin{equation}\label{efxonab}
\langle x^*,\overrightarrow{xy} \rangle:=t \langle \overrightarrow{ab}, \overrightarrow{xy} \rangle,~\text{where}~x^*=[t\overrightarrow{ab}] \in X^{_{\scalebox{1.05}{*}}}~\text{and}~ \overrightarrow{xy} \in X^2.
\end{equation}
Moreover, we use the following notation:
\begin{equation}\label{efxlozonab1}
\langle \alpha x^*+\beta y^*,\overrightarrow{xy} \rangle:=\alpha \langle x^*,\overrightarrow{xy} \rangle + \beta \langle y^*,\overrightarrow{xy} \rangle,
\end{equation}
where $x, y\in X$, $x^*,y^* \in X^{_{\scalebox{1.025}{*}}}$ and $\alpha, \beta \in \mathbb{R}$.

The concept of the \textit{linear dual space} of a Hadamard space $X$, denoted by $X^\smalllozenge$, was introduced in \cite{ChaipunyaKumam} as follows
\[X^\smalllozenge:=\Big\{\sum_{i=1}^n\alpha_i x_i^{_{\scalebox{0.65}{$*$}}} :\alpha_i \in \mathbb{R}, x_i^{_{\scalebox{0.65}{$*$}}} \in X^*,\,1\leq i \leq n,\, n \in \mathbb{N}\Big\}.\]

Throughout this paper, we assume that $p \in X$ is arbitrary and fixed. Note that
\begin{align*}
X^\smalllozenge=\Big\{\sum^n_{i=1}\alpha_i[\overrightarrow{pa_i}]:\alpha_i\in \mathbb{R}, a_i\in X,\,1\leq i \leq n,\, n \in \mathbb{N}\Big\}.
\end{align*}
From (\ref{efxlozonab1}), $X^\smalllozenge$ is a real vector space.
It is known from \cite[Proposition 3.5]{ZamaniRaeisi} that
\begin{align*}
\|x^\loz \|_{\loz}:=\sup\Big\{\sfrac{\big|\langle x^\loz,\overrightarrow{ab}\rangle-\langle x^\loz,\overrightarrow{cd}\rangle\big|}{d(a,b)+d(c,d)}: a,b,c,d \in X, (a,c)\neq (b,d) \Big\},~~(x^\loz\in X^\smalllozenge),
\end{align*}
is a norm on $X^\smalllozenge$.

As it is proved in \cite[Remark 1.3.]{MR0}, for each $a,b,w \in X$ and each 
$x^\loz\!\!\in\! X^\smalllozenge\!$, we have:
\begin{align*}
\langle x^\loz,\overrightarrow{ab} \rangle
=\langle x^\loz,\overrightarrow{aw} \rangle+\langle x^\loz,\overrightarrow{wb} \rangle.
\end{align*}

In the sequel, the following notations and definitions will be used.

The set of all extended real numbers is denoted by $\overline{\mathbb{R}}:=[-\infty,\infty]$. The \textit{indicator function} of $A \subseteq X$ is defined as
\begin{align*}
\iota_A&:X \rightarrow \overline{\mathbb{R}}\\
\iota_A(x)&:=\begin{cases}
0& x \in A,
\\[1mm]
\infty& x \notin A.
\end{cases}
\end{align*}
\begin{definition}{\rm \cite[Section 2.2]{Bacak2014}\label{lscepi} Let $(X,d)$ be a Hadamard space and $f : X \rightarrow ]-\infty,\infty]$ be a function. Then
\begin{enumerate}\setlength\itemsep{0.05em}
\item[(i)] the \textit{domain} of $f$ is defined by
\[\dom f:=\{x \in X : f(x) <\infty \}.\]
Moreover, $f$ is said to be \textit{proper} if $\dom f\neq \emptyset$.
\item[(ii)] $f$ is \textit{lower semi-continuous} (\textit{l.s.c.}, for short) if for each $\alpha \in \mathbb{R}$, the \textit{epigraph of $f$}, i.e., the set $\epi f:=\{(x, \alpha)\in X\times\mathbb{R} : f(x) \leq \alpha \}$, is closed.
\item[(iii)]  $f$ is called \textit{convex} if for each $x,y \in X$ and each $\lambda \in [0,1]$,
\[f\big((1-\lambda)x \oplus \lambda y\big) \leq (1-\lambda)f(x)+\lambda f(y).\]
\end{enumerate}
}\end{definition}
The set of all proper, l.s.c. and convex extended real-valued functions on $X$ is denoted by $\Gamma(X)$.
 Letting $C \subseteq X \times X^\smalllozenge$, we say that $C$ is \textit{convex} if  \[\big((1-\lambda)x \oplus \lambda y,(1-\lambda)x^\loz+\lambda y^\loz\big) \in C,\]
 for each $(x,x^\loz),\,(y,y^\loz) \in C$ and each $\lambda \in [0,1]$.

Let $C$ be a convex subset of $X \times X^\smalllozenge$. The function $h :X\times X^\smalllozenge \rightarrow\overline{\R}$ is called \textit{convex} on $C$ if for each $(x,x^\loz),\,(y,y^\loz) \in C$ and each $\lambda \in [0,1]$,
\[h\big((1-\lambda)x\oplus \lambda y,(1-\lambda)x^\loz+\lambda y^\loz\big) \leq (1-\lambda)h(x,x^\loz)+\lambda h(y,y^\loz).\]
In addition, $h$ is said to be \textit{convex} if $h$ is convex on $X\times X^\smalllozenge$.
Moreover, $h$ is \textit{proper} if $-\infty\notin h(X\times X^\smalllozenge)$ and $h \not\equiv\infty$. Set\vspace{-0.05cm}
\[\Gamma(X\times X^\smalllozenge):=\big\{h :X\times X^\smalllozenge \rightarrow\overline{\mathbb{R}}~|~ h\,\text{ is proper, l.s.c. and convex}\big\}.\vspace{-0.05cm}\]
The \textit{$p$-coupling function} of the dual pair $(X,X^\smalllozenge)$ is defined by:\vspace{-0.05cm}
 \[\pi_p:X \times X^\smalllozenge \rightarrow \mathbb{R},\, (x, x^\loz)\mapsto \langle x^\loz, \overrightarrow{px} \rangle,~~\big((x, x^\loz)\in X \times X^\smalllozenge\big).\vspace{-0.05cm}\]
Some properties of the $p$-coupling function can be found in \cite{MRMA2}.
Define the mapping\vspace{-0.05cm}
\[r:X\times X^\smalllozenge\rightarrow X^\smalllozenge\times X,\,(x, x^{\loz})\mapsto (x^{\loz}, x), ~\big((x, x^\loz)\in X \times X^\smalllozenge\big).\vspace{-0.05cm}\]

Given  any $h : X\times X^\smalllozenge\rightarrow\overline{\mathbb{R}}$ and $p\in X$, for $\divideontimes \in \{\leq,=,\geq,<,>\}$ set
\begin{align*}
 \{ h\divideontimes\pi_p\}&:=\{(x,x^\loz) \in X\times X^\smalllozenge: h(x,x^\loz) \divideontimes \pi_p(x, x^\loz)\}.
 \end{align*}

\section{Flat Hadamard space}\label{flat}
In the remaining parts of this paper, $X$ is a Hadamard space with the linear dual space $X^\smalllozenge$, unless otherwise specified.
\begin{definition}\rm{
The \textit{domain} and \textit{range} of $M\subseteq X\times X^\smalllozenge$ are defined, respectively,  by\vspace{-0.05cm}
\begin{equation*}
\Dom(M):=\{ x\in X : \exists\, x^\loz\in X^\smalllozenge ~\text{s.t.}~ (x, x^\loz)\in M\},\vspace{-0.15cm}
\end{equation*}
and\vspace{-0.05cm}
\begin{equation*}
\Ran(M):=\{ x^\loz\in X^\smalllozenge : \exists\, x\in X ~\text{s.t.}~ (x, x^\loz)\in M\}.\vspace{-0.05cm}
\end{equation*}
}\end{definition}
\begin{definition}{\rm\cite[Definition 3.2]{MRMA2}}\label{zsd}{\rm\, We say that $ M \subseteq X \times X^\smalllozenge$ satisfies
\begin{enumerate}\setlength\itemsep{0.05em}
\item[(i)] \textit{$\mathcal{F}_l$-property} if for each $\lambda \in [0,1]$, $x^\loz \in\Ran(M)$ and $x,y \in\Dom(M)$;
\begin{equation}\label{EqWp}
\big \langle x^\loz,\overrightarrow{p((1-\lambda)x\oplus \lambda y)} \big \rangle\leq(1-\lambda)\langle x^\loz,\overrightarrow{px}\rangle +\lambda \langle x^\loz,\overrightarrow{py} \rangle.
\end{equation}
\item[(ii)] \textit{$\mathcal{F}_g$-property} if for each $\lambda \in [0,1]$, $x^\loz \in\Ran(M)$ and $x,y \in\Dom(M)$;
\begin{equation}\label{EqWl}
\big \langle x^\loz,\overrightarrow{p((1-\lambda)x\oplus \lambda y)} \big \rangle\geq(1-\lambda)\langle x^\loz,\overrightarrow{px}\rangle +\lambda \langle x^\loz,\overrightarrow{py} \rangle.\end{equation}
\item[(iii)] \textit{$\mathcal{F}$-property} if it has both $\mathcal{F}_l$-property and $\mathcal{F}_g$-property.
\end{enumerate}
}\end{definition}

Let $\mathcal{P}\in\{\mathcal{F}_l, \mathcal{F}, \mathcal{F}_g\}$. It follows from {\rm{\rm\cite[Remark 3.3(ii)]{MRMA2}}}  that the $\mathcal{P}$-property is independent of the choice of the point $p\in X$.

Recall that \cite[Definition 3.1]{LurieNotes} a Hadamard space $(X,d)$ is called \textit{flat} if equality holds in the CN-inequality \eqref{cn}. Some examples of flat Hadamard spaces are Hilbert spaces and closed unit balls in any Hilbert space.

As we will see in the next theorem, the $\mathcal{P}$-property provides a characterization of flatness. Indeed,
\begin{theorem}{\rm{\rm\cite[Theorem 3.5]{MRMA2}}}  \label{frtg}The following statements for a Hadamard space $X$ with the linear dual $X^\smalllozenge$ are equivalent.
\begin{enumerate}\setlength\itemsep{0.05em}
\item[{\rm(i)}] $X$  is flat.

\item[{\rm(ii)}] $X\times X^\smalllozenge$ has the $\mathcal{P}$-property.

\item[{\rm(iii)}] Any subset of $X\times X^\smalllozenge$ has the $\mathcal{P}$-property.
 \end{enumerate}
  \end{theorem}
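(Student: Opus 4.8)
\emph{Proof proposal.} The plan is to settle the cheap equivalence (ii)$\,\Leftrightarrow\,$(iii) at once and then to reduce both halves of (i)$\,\Leftrightarrow\,$(ii) to a single scalar identity. Since $\Dom(X\times X^\smalllozenge)=X$ and $\Ran(X\times X^\smalllozenge)=X^\smalllozenge$, the $\mathcal{P}$-property of the whole product is exactly the defining inequality quantified over \emph{all} $x^\loz\in X^\smalllozenge$ and \emph{all} $x,y\in X$; it is therefore inherited by every subset (giving (ii)$\,\Rightarrow\,$(iii)), and (iii)$\,\Rightarrow\,$(ii) is immediate since $X\times X^\smalllozenge$ is one of its own subsets. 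For the core equivalence I would fix the geodesic point $m:=(1-\lambda)x\oplus\lambda y$ and introduce the auxiliary function
\[
F(w):=d(w,m)^2-(1-\lambda)d(w,x)^2-\lambda d(w,y)^2,\qquad w\in X .
\]
Expanding the quasilinearization (\ref{inb}) for a single dual vector $x^\loz=[t\overrightarrow{ab}]$, the base-point terms $d(a,p)^2$ and $d(b,p)^2$ cancel with coefficient $1-(1-\lambda)-\lambda=0$, leaving
\[
\langle x^\loz,\overrightarrow{pm}\rangle-(1-\lambda)\langle x^\loz,\overrightarrow{px}\rangle-\lambda\langle x^\loz,\overrightarrow{py}\rangle=\frac{t}{2}\big(F(a)-F(b)\big).
\]
Both implications can now be read off this identity: for (i)$\,\Rightarrow\,$(ii) its right-hand side will vanish on every generator and, being an equality, propagates to all of $X^\smalllozenge$ via (\ref{efxlozonab1}); for (ii)$\,\Rightarrow\,$(i) the generators alone suffice.

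For (i)$\,\Rightarrow\,$(ii): flatness means the CN-inequality (\ref{cn}) is an equality for every base point $w$, which reads precisely $F(w)=-\lambda(1-\lambda)d(x,y)^2$, a value independent of $w$. Hence $F(a)-F(b)=0$ for all $a,b$, the displayed difference vanishes on each $[t\overrightarrow{ab}]$, and the two-sided equality---the $\mathcal{F}$-property, hence also $\mathcal{F}_l$ and $\mathcal{F}_g$---holds on all of $X\times X^\smalllozenge$.

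For (ii)$\,\Rightarrow\,$(i), which is the decisive direction: whichever of $\mathcal{F}_l,\mathcal{F},\mathcal{F}_g$ is assumed, the inequality must hold for $x^\loz=[t\overrightarrow{ab}]$ for \emph{every} $t\in\mathbb{R}$, and letting $t$ run through both signs (equivalently, interchanging $a$ and $b$) forces the one-sided inequality to collapse into the identity $F(a)=F(b)$ for all $a,b\in X$. Thus, for the fixed data $(x,y,\lambda)$, the function $F$ is constant on $X$. Evaluating this constant at $w=m$ and using $d(m,x)=\lambda d(x,y)$ and $d(m,y)=(1-\lambda)d(x,y)$ from the geodesic parametrization gives $F(m)=-\lambda(1-\lambda)d(x,y)^2$; equating $F(z)=F(m)$ for arbitrary $z$ then reproduces the CN-equality for $(x,y,\lambda,z)$, and arbitrariness of the data yields flatness.

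The main obstacle---really the single idea on which the argument turns---is recognizing that the freedom in the sign of $t$ upgrades the one-directional $\mathcal{P}$-property into the two-sided identity $F(a)=F(b)$. Once that is in hand, the single evaluation $a=z$, $b=m$ extracts the CN-equality, and only the routine geodesic distance bookkeeping (together with the linearity remark (\ref{efxlozonab1}) in the converse direction) remains.
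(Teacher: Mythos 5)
Your proof is correct. Note that the paper itself offers no proof to compare against: Theorem \ref{frtg} is quoted verbatim from \cite[Theorem 3.5]{MRMA2}, so your argument has to stand on its own, and it does. Expanding the quasilinearization (\ref{inb}) for a generator $x^\loz=[t\overrightarrow{ab}]$, the base-point terms $d(a,p)^2$ and $d(b,p)^2$ indeed enter with total coefficient $1-(1-\lambda)-\lambda=0$, which gives your identity $\langle x^\loz,\overrightarrow{pm}\rangle-(1-\lambda)\langle x^\loz,\overrightarrow{px}\rangle-\lambda\langle x^\loz,\overrightarrow{py}\rangle=\frac{t}{2}\big(F(a)-F(b)\big)$ with $m=(1-\lambda)x\oplus\lambda y$; flatness is exactly the statement that $F\equiv-\lambda(1-\lambda)d(x,y)^2$ is constant in the base point; and the two delicate points are both handled correctly. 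First, the passage from generators to all of $X^\smalllozenge$ via the linear action (\ref{efxlozonab1}) is legitimate precisely because what you propagate is an \emph{equality} (the coefficients $\alpha_i$ in an element $\sum_i\alpha_i x_i^*$ of $X^\smalllozenge$ have arbitrary signs, so a one-sided inequality would not survive this step). Second, in the converse direction the freedom in the sign of $t$ (available because $\Ran(X\times X^\smalllozenge)=X^\smalllozenge$ contains $[t\overrightarrow{ab}]$ for every real $t$) correctly collapses whichever one-sided $\mathcal{P}$-property is assumed into $F(a)=F(b)$ for all $a,b$. The evaluation $F(m)=-\lambda(1-\lambda)d(x,y)^2$, using $d(m,x)=\lambda d(x,y)$ and $d(m,y)=(1-\lambda)d(x,y)$ from the geodesic parametrization, then recovers the CN-equality from $F(z)=F(m)$, and your inheritance argument for (ii)$\Leftrightarrow$(iii) is immediate, as you say.
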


\begin{remark}\label{AK01}
(i) It is known \cite[Definition and Notation 2.2.]{KakavandiAmini} that, if $X$ is a closed and convex subset of a Hilbert space $(\mathcal{H},\langg{\cdot}\,|\,{\cdot}\rangg)$ with a non-empty interior, then $X^*=\mathcal{H}$. For the sake of completeness, we add a modified version of the proof. Let $a \in \Int(X)$. Hence, $B_{\epsilon}[a]:=\{u\in\mathcal{H}: \|u-a\|\leq\epsilon\}\subseteq X$ for some $\epsilon >0$. Now, we define the map $j_a:\mathcal{H} \rightarrow X^*$ by
\begin{align}\label{jpdef}
j_a(x)&:=\begin{cases}
\Big[\frac{\|x\|}{\epsilon}\overrightarrow{a\big(a+\frac{\epsilon x}{\|x\|}\big)}\Big]& x \neq 0 ,
\\[1mm]
 \mathbf{0}& x=0 .
\end{cases}
\end{align}
It turns out that $j_a$ is a surjective isomorphism. To see this, first we consider the following cases:

\textbf{Case 1:~}Let $x,y \in \mathcal{H}\setminus \{0\}$, set $\alpha:=\frac{\|x\|}{\epsilon}$ and $\beta:=\frac{\|y\|}{\epsilon}$. Then
\begin{align*}
\hspace{-0.35cm}&D(j_a(x),j_a(y))=L\big(\Theta\big(\alpha,a,a+\sfrac{x}{\alpha}\big)-\Theta\big(\beta,a,a+\sfrac{y}{\beta}\big)\big)\\[-0.5mm]
&=\!\!\!\sup_{u,v \in X, \,u\neq v}\!\!\!\!\!\!\frac{\big(\Theta\big(\alpha,a,a\!+\!\mfrac{x}{\alpha}\big)\!-\!\Theta\big(\beta,a,a\!+\!\mfrac{y}{\beta}\big)\big)(u)\!-\!\big(\Theta\big(\alpha,a,a\!+\!\mfrac{x}{\alpha}\big)\!-\!\Theta\big(\beta,a,a\!+\!\mfrac{y}{\beta}\big)\big)(v)}{\|u-v\|}\\[-0.5mm]
&=\sup_{u,v \in X, \,u\neq v}\frac{\alpha\big(\langle\overrightarrow{a(a+\mfrac{x}{\alpha})},\overrightarrow{au}\rangle\!-\!\langle\overrightarrow{a(a+\mfrac{x}{\alpha})},\overrightarrow{av}\rangle\big)+\beta\big(\langle\overrightarrow{a(a+\mfrac{y}{\beta})},\overrightarrow{av}\rangle\!-\!\langle\overrightarrow{a(a+\mfrac{y}{\beta})},\overrightarrow{au}\rangle\big)}{\|u-v\|}\\[-0.5mm]
&=\sup_{u,v \in X, \,u\neq v}\frac{\alpha\langle\overrightarrow{a(a+\mfrac{x}{\alpha})},\overrightarrow{vu}\rangle+\beta\langle\overrightarrow{a(a+\mfrac{y}{\beta})},\overrightarrow{uv}\rangle}{\|u-v\|}\\[-0.5mm]
&=\sup_{u,v \in X, \,u\neq v}\frac{\alpha\langg{\frac{x}{\alpha} \,}|{ \,u-v}\rangg+\beta\langg{\sfrac{y}{\beta} \,}|{ \,v-u}\rangg}{\|u-v\|}\,\qquad\qquad\qquad\qquad~~\qquad\hfill (\text{by Remark \ref{4mquasilintoinpro}})\\[-0.5mm]
&=\sup_{u,v \in X, \,u\neq v}\frac{\langg{x-y \,}|{ \,u-v}\rangg}{\|u-v\|}\\[-0.5mm]
&=\|x-y\|.
\end{align*}
\textbf{Case 2:~} Let $x \in \mathcal{H}\setminus \{0\}$, $y=0$ and set $\alpha:=\frac{\|x\|}{\epsilon}$. Then
\begin{align*}
D(j_a(x),j_a(y))&=D(j_a(x),j_a(0))\\&=L\big(\Theta\big(\alpha,a,a+\ssfrac{x}{\alpha}\big)\big)\\[-0.5mm]
&=\sup_{u,v \in X,\,u\neq v}\frac{\Theta\big(\alpha,a,a+\mfrac{x}{\alpha}\big)(u)-\Theta\big(\alpha,a,a+\mfrac{x}{\alpha}\big)(v)}{\|u-v\|}\\[-0.5mm]
&=\sup_{u,v \in X,\,u\neq v}\frac{\alpha\big(\langle\overrightarrow{a(a+\mfrac{x}{\alpha})},\overrightarrow{au}\rangle-\langle\overrightarrow{a(a+\mfrac{x}{\alpha})},\overrightarrow{av}\rangle\big)}{\|u-v\|}\\[-0.5mm]
&=\sup_{u,v \in X,\,u\neq v}\frac{\alpha\langle\overrightarrow{a(a+\mfrac{x}{\alpha})},\overrightarrow{vu}\rangle}{\|u-v\|}\\[-0.5mm]
&=\sup_{u,v \in X,\,u\neq v}\frac{\alpha\langg{\mfrac{x}{\alpha}\,}|{\,u-v}\rangg}{\|u-v\|}\,\qquad \qquad\qquad\qquad~\quad(\text{using Remark \ref{4mquasilintoinpro}})\\[-0.5mm]
&=\|x\|.
\end{align*}
To prove the surjectivity of $j_a$, let $[t\overrightarrow{xy}] \in X^*\setminus \{\mathbf{0}\}$ and $u,v \in X$ be arbitrary, using Remark \ref{4mquasilintoinpro} we get:
\begin{align*}
\langle j_a(t(y-x)), \overrightarrow{uv} \rangle &=\frac{|t|\|y-x\|}{\epsilon}\Bigg\langle \overrightarrow{a\Big(a+\frac{\epsilon t(y-x)}{\|t(y-x)\|}\Big)}, \overrightarrow{uv} \Bigg\rangle\\
&=t\langg{y-x\,}|{\, v-u}\rangg\\
&=\langle [t\overrightarrow{xy}],\overrightarrow{uv}\rangle,
\end{align*}
which implies that
\begin{equation}\label{Hassan01}
j_a(t(y-x))=[t\overrightarrow{xy}].
\end{equation}

(ii) As a special case of (i), let $X=\mathcal{H}$.  Using \eqref{Hassan01},
we obtain that $j_a(x)=\big[\overrightarrow{0x}\big]$ for all $a, x \in \mathcal{H}$. In particular, $j_0(x)=\big[\overrightarrow{0x}\big]$ for all $x \in \mathcal{H}$. Therefore, in the sense of isomorphism, one can write $\mathcal{H}^*=\mathcal{H}^\smalllozenge=\mathcal{H}$.
\end{remark}
\begin{remark}\label{Rem-Flat}
Note that any nonempty, closed and convex subset of a flat Hadamard space is also flat and so is any nonempty, closed and  convex subset of a Hilbert space. Moreover, according to \cite[Theorem 7.2]{LurieNotes}, any flat Hadamard space is isometric to a nonempty, closed and convex subset of a Hilbert space. Also, by
\cite[Theorem 3.3]{MovahediBehmardiSoleimani-Damaneh}, any flat Hadamard real vector space with a
translation invariant metric is a Hilbert space. It is worth noting that any closed unit ball in arbitrary Hilbert spaces is a flat Hadamard space, which is not even a vector space
(see \cite[Remark 3.4]{MovahediBehmardiSoleimani-Damaneh}).
\end{remark}

The following examples will be used in the sequel.
\begin{example}\label{ExMonS}
Consider the following equivalence relation on $\mathbb{N} \times [0,1]$:
\[(n,t)\sim (m,s):\Leftrightarrow t=s=0 ~\text{or} ~(n,t)=(m,s).\]
Set $ X:=\frac{\mathbb{N} \times [0,1]}{\sim} $ and let $ d :X \times X \rightarrow \mathbb{R} $ be defined by
\begin{align*}
d([(n,t)],[(m,s)])=\begin{cases}
|t-s| &n=m,
\\
t+s  & n\neq m.
\end{cases}
\end{align*}
The geodesic joining $x=\big[(n,t)\big]$ to $y=\big[(m,s)\big] $ is defined as follows:
\[ (1-\lambda)x \oplus \lambda y:=\begin{cases}
\big[(n,(1-\lambda)t-\lambda s)\big] & 0 \leq \lambda \leq \frac{t}{t+s},
\\[1mm]
\big[(m,(\lambda-1)t+\lambda s)\big]   & \frac{t}{t+s} \leq  \lambda \leq 1,
\end{cases}
\]
whenever $x\neq y$ and otherwise  $(1-\lambda)x \oplus \lambda x:=x$ for each $\lambda\in[0,1]$.

It is well known that (see \cite[Example 4.7]{Kakavandi}) $(X,d)$ is an $\mathbb{R}$-tree space. It follows from \cite[Example 1.2.10]{Bacak2014}, that $\mathbb{R}$-tree spaces are Hadamard spaces. Moreover, we see from  \cite[Example 3.7]{MRMA2} that $(X,d)$ is a non-flat Hadamard space.
\end{example}

\begin{example} Consider $\mathbb{R}^{n+1}$ equipped with the \textit{$(-1,n)$-inner product}
\[\langle x,y \rangle_{(-1,n)}:=-x_{n+1}y_{n+1}+\sum_{i=1}^n x_iy_i ,~ x=(x_1,\ldots,x_{n+1}), \,y=(y_1,\ldots,y_{n+1})\in \mathbb{R}^{n+1}.\]
The \textit{real hyperbolic $n$-space} is defined by \[\mathbb{H}^n:=\big\{ x=(x_1,\ldots,x_{n+1})\in \mathbb{R}^{n+1}:\langle x,x \rangle_{(-1,n)}=-1,\, x_{n+1}>0\big\}.\]
 Define the metric $d:\mathbb{H}^n \times \mathbb{H}^n \rightarrow [0,\infty)$ by \[d(x,y):=\cosh^{-1}\big(-\langle x,y \rangle_{(-1,n)}\big).\] In addition, the geodesic path from $x\in \mathbb{H}^n$  to $y \in \mathbb{H}^n$  is defined by
\begin{align*}
(1-t)x \oplus t y:= \big(\cosh td(x,y)\big)x+\big(\sinh td(x,y)\big)z,~~(t \in [0,1]),
\end{align*}
where, $z:=\frac{y+\langle x,y \rangle_{(-1,n)}x}{\sqrt{\langle x,y \rangle_{(-1,n)}^2-1}}$.
It is known that $\mathbb{H}^n$ is a Riemannian $n$-manifold with the sectional curvature $-1$ at any point. In particular, $\mathbb{H}^n$ is a Hadamard space. For more details see \cite[Example 1.2.11]{Bacak2014}. We claim that $\mathbb{H}^n$ is a non-flat Hadamard space. To see this, let $x^*:=[\overrightarrow{ab}]$, where
\[a:=\big(1,0,\ldots,0,\sqrt{2}\big) \text{ and } b:=\big(-1,0,\ldots,0,\sqrt{2}\big). \]
Assume that  \[x:=\big(0,1,0,\ldots,0,\sqrt{2}\big) \text{ and } y:=\big(-1,0,\ldots,0,\sqrt{2}\big). \] One can see that $\langle x,y \rangle_{(-1,n)}=-2$.
Then
\[z=\frac{y+\langle x,y \rangle_{(-1,n)}x}{\sqrt{\langle x,y \rangle_{(-1,n)}^2-1}}=\frac{1}{\sqrt{3}}(y-2x)=\frac{1}{\sqrt{3}}\big(-1,-2,0,\ldots,0,-\sqrt{2}\big).\]
Moreover, $d(x,y)=\cosh^{-1}\big(-\langle x,y \rangle_{(-1,n)}\big)=\cosh^{-1}(2)$. Set $t:=\frac{1}{2}$, then
\begin{align*}
(1-t)x \oplus ty&=\sfrac{1}{2}x \oplus \sfrac{1}{2} y\\
&=\big(\cosh \sfrac{1}{2}d(x,y)\big)x+\big(\sinh \sfrac{1}{2}d(x,y)\big)z\\
&=\Big(\cosh \sfrac{1}{2}\big(\cosh^{-1}(2)\big)\Big)x+\Big(\sinh \sfrac{1}{2}\big(\cosh^{-1}(2)\big)\Big)z\\
&=\alpha(0,1,0,\ldots,0,\sqrt{2})+\beta (-1,-2,0,\ldots,0,-\sqrt{2})\\
&=(-\beta,\alpha-2\beta,0,\ldots,0,\sqrt{2}(\alpha-\beta)).
\end{align*}
where $\alpha:=\cosh\big(\frac{1}{2}\cosh^{-1} 2\big)$ and $\beta:=\frac{\sinh \big(\frac{1}{2}\cosh^{-1} 2\big)}{\sqrt{3}}$.
Therefore,
\begin{align*}
\langle x^*,\overrightarrow{x((1-t)x \oplus ty)} \rangle &=\Big\langle [\overrightarrow{ab}],\overrightarrow{x\big(\ssfrac{1}{2}x \oplus \ssfrac{1}{2} y\big)}\Big\rangle=\Big\langle \overrightarrow{ab},\overrightarrow{x\big(\ssfrac{1}{2}x \oplus \ssfrac{1}{2}y\big)}\Big\rangle\\
 &=\sfrac{1}{2}\Big(\big(\cosh^{-1}(2\alpha-\beta)\big)^2+\big(\cosh^{-1}(2)\big)^2\\&~~~~~~~~~-\big(\cosh^{-1}(2)\big)^2-\big(\cosh^{-1}(2\alpha-3\beta)\big)^2\Big)\\
  &=\sfrac{1}{2}\Big(\big(\cosh^{-1}(2\alpha-\beta)\big)^2-\big(\cosh^{-1}(2\alpha-3\beta)\big)^2\Big)\\
  &\cong 0.6816.
\end{align*}
On the other hand,
\begin{align*}
\sfrac{1}{2}\langle x^*,\overrightarrow{xx} \rangle+\sfrac{1}{2}\langle x^*,\overrightarrow{xy} \rangle &=\sfrac{1}{2}\langle x^*,\overrightarrow{xy} \rangle =\sfrac{1}{2}\langle [\overrightarrow{ab}],\overrightarrow{xy} \rangle =\sfrac{1}{2}\langle \overrightarrow{ab},\overrightarrow{xy} \rangle \\
&=\sfrac{1}{2}\bigg(\sfrac{1}{2}\Big(\big(\cosh^{-1}(3)\big)^2+\big(\cosh^{-1}(2)\big)^2\\&~~~~~~~~~~-\big(\cosh^{-1}(2)\big)^2-\big(\cosh^{-1}(1)\big)^2\Big)\bigg)\\
&\cong 0.7768.
\end{align*}
Therefore,
\[\Big\langle x^*,\overrightarrow{x\big(\ssfrac{1}{2}x \oplus \ssfrac{1}{2} y\big)}\Big\rangle \neq \ssfrac{1}{2}\langle x^*,\overrightarrow{xx} \rangle+\ssfrac{1}{2}\langle x^*,\overrightarrow{xy} \rangle.\]
Hence, Theorem \ref{frtg} implies that $\mathbb{H}^n$ is  not flat.
\end{example}

\section{$p$-Fenchel conjugate}\label{p-Fenchel}
In this section, we introduce and investigate the concept of $p$-Fenchel conjugate and we prove some useful results.
\begin{definition} \label{$p$-Fenchel}
Let $h :X\times X^\smalllozenge\rightarrow\overline{\mathbb{R}}$. The \textit{$p$-Fenchel conjugate} of $h$ from $X^\smalllozenge\times X$ to $\overline{\R}$ is defined by:
\begin{align}
h_{p}^{\loz}(x^{\loz}, x)=\sup_{(y,y^{\loz})\in X\times X^\smalllozenge}\{\langle x^{\loz}, \overrightarrow{py}\rangle + \langle y^{\loz}, \overrightarrow{px}\rangle-h(y, y^{\loz}) \}. \label{t22}
\end{align}
\end{definition}
Moreover, set
\begin{equation}\label{Gamma-Loz-p}
\Gamma^{\,^\loz}_{p}(X):=\big\{h \in \Gamma(X\times X^\smalllozenge):
h=\big(h+\iota_{\{h \leq \pi_p\}}\big)_{p}^{\loz}\circ r \text{~on~} X\times X^\smalllozenge\big\}.
\end{equation}

Let $\big(\mathcal{H},\langg{\cdot}\,|\,{\cdot}\rangg\big)$ be a real Hilbert space and $h :\mathcal{H}\times \mathcal{H}\rightarrow\overline{\mathbb{R}}$. We recall from \cite{EliasMartinez-Legaz} that the Fenchel conjugate of $h$ is
\begin{align*}
 h^\star:&\mathcal{H}\times \mathcal{H}\rightarrow\overline{\mathbb{R}}\\
&(u,x)\mapsto \sup_{(y, v)\in\mathcal{H}\times \mathcal{H}}\{\langg{u\,}|{\,y}\rangg+\langg{v\,}|{\,x}\rangg-h(y,v)\},~~u,x\in\mathcal{H}.
\end{align*}

\begin{remark}\label{AK02}
\begin{enumerate}
\item[(i)]
Let $X$ be a closed and convex subset of the Hilbert space $\big(\mathcal{H},\langg{\cdot}\,|\,{\cdot}\rangg\big)$ with $a \in \Int(X)$. Let $h :X\times X^\smalllozenge\rightarrow\overline{\mathbb{R}}$ be given. Then,
\begin{align*}
h_{p}^{\loz}&(x^{\loz}, x)=h_{p}^{\loz}(j_a(u), x)\qquad\,\,\quad\!\!\!\big(x^{\loz}=j_a(u) \text{ for some } u\in\mathcal{H} \text{ by surjectivity of } j_a\big)\\
&=\sup_{(y,y^{\loz})\in X\times X^\smalllozenge}\big\{\langle j_a(u), \overrightarrow{py}\rangle + \langle y^{\loz}, \overrightarrow{px}\rangle-h(y, y^{\loz}) \big\}\\
&=\sup_{(y,v)\in X\times \mathcal{H}}\big\{\langle j_a(u), \overrightarrow{py}\rangle + \langle j_a(v), \overrightarrow{px}\rangle-h(y, j_a(v)) \big\}\quad\big(\text{by surjectivity of } j_a\big)\\
&=\sup_{(y,v)\in X\times \mathcal{H}}\big\{\langg{u\,}|{\,y-p}\rangg+\langg{v\,}|{\,x-p}\rangg-h(y, j_a(v)) \big\}\\
&=\sup_{(y,v)\in X\times \mathcal{H}}\big\{\langg{u\,}|{\,y}\rangg+\langg{v\,}|{\,x}\rangg-\tilde{h}_p (y, v) \big\}-\langg{u\,}|{\,p}\rangg \\
&=\big(\tilde{h}_p\big)^\star(x,u)-\langg{u\,}|{\,p}\rangg,
\end{align*}
where $\tilde{h}_p (y, v):=\langg{v\,}|{\,p}\rangg+h(y, j_a(v))$.
\item[(ii)] As a special case of (i), when $X=\mathcal{H}$ and $p=0$,  we have:
\[h_0^{\loz}([\overrightarrow{0u}], x)=\big(\tilde{h}_0\big)^\star(u,x)-\langg{u\,}|{\,0}\rangg=\big(\tilde{h}_0\big)^\star(u,x).\]
Note that $\tilde{h}_0 (y, v)=h(y, j_a(v))$. By Remark \ref{AK01}(ii), one can write $h_0^{\loz}=h^\star$.
\end{enumerate}
\end{remark}

\begin{proposition}\label{FenchelYoungIneqth}
Let $h :X\times X^\smalllozenge\rightarrow\overline{\mathbb{R}}$ be proper. Then
\begin{enumerate}\setlength\itemsep{-0.01em}
\item[{\rm(i)}]
for each $(x, x^{\loz}),  (y, y^{\loz})\in X\times X^\smalllozenge$ we have:
\begin{align}\label{FenchelYoungIneq}
\hspace{-0.2cm}h(x, x^{\loz})+h_p^{\loz}(y^{\loz},y)\geq \langle y^{\loz}, \overrightarrow{px}\rangle+\langle x^{\loz}, \overrightarrow{py}\rangle. ~\textrm{(the {$p$-Fenchel-Young Inequality})}
\end{align}
\item[{\rm(ii)}] $\big\{\sfrac{1}{2}\big( h+h_{p}^{\loz}\circ r\big) \geq \pi_p\big\}=X \times X^\smalllozenge$.
\item[{\rm(iii)}] $\{h\geq\pi_p\}=X \times X^\smalllozenge$ for each $h \in \Gamma^{\,^\loz}_{p}(X)$.
\end{enumerate}
\end{proposition}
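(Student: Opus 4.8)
The three parts are naturally ordered: (i) is the fundamental inequality, (ii) is an immediate reformulation of a special case of (i), and (iii) follows from (ii) together with the definition of $\Gamma^{\,^\loz}_{p}(X)$. So the plan is to prove (i) directly from the definition of the conjugate, then harvest (ii) and (iii) as corollaries.

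For part (i), the strategy is the standard Fenchel--Young argument: fix $(x,x^\loz),(y,y^\loz)\in X\times X^\smalllozenge$ and unwind the definition of $h_p^\loz$. By Definition \ref{$p$-Fenchel} applied to the point $(y^\loz,y)$,
\begin{align*}
h_p^\loz(y^\loz,y)=\sup_{(z,z^\loz)\in X\times X^\smalllozenge}\big\{\langle y^\loz,\overrightarrow{pz}\rangle+\langle z^\loz,\overrightarrow{py}\rangle-h(z,z^\loz)\big\}.
\end{align*}
Since the supremum dominates the value of the bracketed expression at the particular choice $(z,z^\loz)=(x,x^\loz)$, we obtain
\begin{align*}
h_p^\loz(y^\loz,y)\geq \langle y^\loz,\overrightarrow{px}\rangle+\langle x^\loz,\overrightarrow{py}\rangle-h(x,x^\loz).
\end{align*}
Rearranging gives exactly \eqref{FenchelYoungIneq}. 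The only subtlety is that the rearrangement must not create an $\infty-\infty$ expression; this is where properness of $h$ is used. Since $h$ is proper we have $h(x,x^\loz)>-\infty$ everywhere, so moving $h(x,x^\loz)$ to the left-hand side is legitimate, and the inequality holds in $\overline{\mathbb{R}}$ in all cases (if $h(x,x^\loz)=\infty$ the left side is $\infty$ and the inequality is trivial). I expect this bookkeeping with extended-real arithmetic to be the only genuine care-point in the whole proof.

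For part (ii), apply (i) in the symmetric situation $(y,y^\loz)=(x,x^\loz)$. With this substitution the right-hand side of \eqref{FenchelYoungIneq} becomes $\langle x^\loz,\overrightarrow{px}\rangle+\langle x^\loz,\overrightarrow{px}\rangle=2\pi_p(x,x^\loz)$, while the left-hand side is $h(x,x^\loz)+h_p^\loz(x^\loz,x)=h(x,x^\loz)+(h_p^\loz\circ r)(x,x^\loz)$, recalling $r(x,x^\loz)=(x^\loz,x)$. Dividing by $2$ yields $\tfrac12\big(h+h_p^\loz\circ r\big)(x,x^\loz)\geq \pi_p(x,x^\loz)$ for every $(x,x^\loz)$, which is precisely the asserted equality $\big\{\sfrac{1}{2}(h+h_p^\loz\circ r)\geq\pi_p\big\}=X\times X^\smalllozenge$.

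For part (iii), let $h\in\Gamma^{\,^\loz}_{p}(X)$. By the defining relation in \eqref{Gamma-Loz-p}, $h=\big(h+\iota_{\{h\leq\pi_p\}}\big)_p^\loz\circ r$ on $X\times X^\smalllozenge$. Setting $g:=h+\iota_{\{h\leq\pi_p\}}$, the idea is to apply part (ii) to $g$: it gives $\tfrac12\big(g+g_p^\loz\circ r\big)\geq\pi_p$, i.e. $g+g_p^\loz\circ r\geq 2\pi_p$ pointwise. On the set $\{h\leq\pi_p\}$ one has $g=h$, and off that set $g=\infty$, so $g\geq h$ everywhere; combined with $g_p^\loz\circ r=h$ (the membership hypothesis) this should force $h\geq\pi_p$ throughout. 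The point to verify carefully is that $g$ is proper so that (ii) applies — this needs $\{h\leq\pi_p\}\neq\emptyset$, which follows because $h\in\Gamma(X\times X^\smalllozenge)$ is proper and, via the defining fixed-point identity, cannot satisfy $h>\pi_p$ everywhere without contradicting the conjugate formula. I expect this last compatibility check — confirming properness of the restricted function and that the fixed-point identity is consistent — to be the main obstacle, and the one place where the structural properties of $\Gamma^{\,^\loz}_{p}(X)$ genuinely enter rather than just the raw Fenchel--Young inequality.
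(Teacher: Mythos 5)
Your proofs of (i) and (ii) are correct and coincide with the paper's: (i) is exactly the evaluation of the defining supremum at $(z,z^{\loz})=(x,x^{\loz})$, and (ii) is (i) specialized to $(y,y^{\loz})=(x,x^{\loz})$.

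In (iii) your overall strategy is also the paper's --- apply (ii) to $g:=h+\iota_{\{h\leq\pi_p\}}$ and combine it with the defining identity $h=g_p^{\loz}\circ r$ from \eqref{Gamma-Loz-p} --- and your properness check is a legitimate point the paper leaves implicit (if $\{h\leq\pi_p\}=\emptyset$ then $g\equiv\infty$, hence $g_p^{\loz}\equiv-\infty$ and $h=g_p^{\loz}\circ r\equiv-\infty$, contradicting properness of $h$). However, your closing inference is a non sequitur: from $g+g_p^{\loz}\circ r=g+h\geq 2\pi_p$ together with ``$g\geq h$ everywhere'' one cannot conclude $h\geq\pi_p$, because the inequality $g\geq h$ points the wrong way --- at any point where $g=\infty$ both hypotheses hold no matter how small $h$ is, so they carry no information about $h$ there. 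What actually closes the argument is the case split you set up but did not execute: at a point of $\{h\leq\pi_p\}$ the indicator vanishes, so $g=h$ and (ii) together with the identity give $2h=g+g_p^{\loz}\circ r\geq 2\pi_p$, i.e.\ $h\geq\pi_p$ (in fact $h=\pi_p$ there); at a point outside $\{h\leq\pi_p\}$ one has $h>\pi_p$ by the very definition of that set, and (ii) is not needed at all. This two-case conclusion is precisely how the paper argues; with that one-line replacement your proof is complete.
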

\begin{description}\setlength\itemsep{-0.01em}
\item[{\textit{\textbf{Proof.}}}~{\rm (i)}]  Let $(x, x^{\loz}), (y, y^{\loz})\in X \times X^\smalllozenge$.   Using Definition \ref{$p$-Fenchel} we obtain:
\begin{align*}
h_p^{\loz}(y^{\loz}, y)&=\sup_{(z,z^{\loz})\in X\times X^\smalllozenge}\{\langle y^{\loz}, \overrightarrow{pz}\rangle + \langle z^{\loz}, \overrightarrow{py}\rangle-h(z, z^{\loz}) \}\\
&\geq\langle y^{\loz}, \overrightarrow{px}\rangle + \langle x^{\loz}, \overrightarrow{py}\rangle-h(x, x^{\loz}),
\end{align*}
and hence \eqref{FenchelYoungIneq} holds.

\item[{\rm(ii)}] This is a direct consequence of the $p$-Fenchel-Young inequality \eqref{FenchelYoungIneq}.

\item[{\rm(iii)}] Let  $h \in \Gamma^{\,^\loz}_{p}(X)$. First assume that $(x,x^{\loz}) \notin \{h\leq\pi_p\} $. Then
\[h (x, x^\loz) > \langle x^{\loz}, \overrightarrow{px}\rangle.\]
On the other hand, for each $(x,x^{\loz}) \in \{h\leq\pi_p\} $, using $h \in \Gamma^{\,^\loz}_{p}(X)$ and (ii) we have:
\begin{align*}
h(x, x^{\loz})&=\sfrac{1}{2}\Big(h (x, x^{\loz})+\big(\big(h+\iota_{\{h \leq \pi_p\}}\big)_{p}^{\loz}\circ r\big)(x, x^{\loz})\Big)\\ \nonumber
&=\sfrac{1}{2}\Big(\big(h+\iota_{\{h \leq \pi_p\}} \big)(x, x^{\loz})+\big(\big(h+\iota_{\{h \leq \pi_p\}}\big)_{p}^{\loz}\circ r\big)(x, x^{\loz})\Big)\\
&\geq \pi_p(x, x^{\loz}).   
\end{align*}
\end{description}
Therefore $\{h\geq\pi_p\}=X \times X^\smalllozenge$. \hfill \qed

\section{$p$-Fitzpatrick transform}\label{p-Fitzpatrick transform}
 The main aim of this section is to prove  some relations between maximal monotone operators and elements of $\Gamma^{\,^\loz}_{p}(X)$.

\begin{definition}{\rm{\rm\cite[Definition 4.1]{MRMA2}}} \label{monpol}{\rm
 We say that $(x,x^\loz)\in X\times X^\smalllozenge$ and $(y,y^\loz) \in X\times X^\smalllozenge$ are \textit{monotonically related}, and we write $(x,x^\loz) \mu (y,y^\loz) $, if $ \langle x^\loz-y^\loz,\overrightarrow{yx}\rangle\geq0 $.  Moreover, $(x,x^\loz) \in X\times X^\smalllozenge$ is \textit{monotonically related} to  $M \subseteq X\times X^\smalllozenge$, denoted by $(x,x^\loz)\mu M$, if
$(x,x^\loz)\mu(y,y^\loz)$  for each $(y,y^\loz) \in M.$

Given  $M\subseteq X \times X^\smalllozenge$, the \textit{monotone polar} of $M$ is defined by \[M^{\mu}:=\{(x,x^\loz)\in X\times X^\smalllozenge : (x,x^\loz)\mu M\}.\]
The
\textit{domain} of a set-valued operator $ T : X \multimap X^\smalllozenge$ is $D(T) :=\{ x\in X : T(x)
\neq\emptyset\}$. The \textit{graph} of $ T $,
denoted by $ \gra (T) $, is $\{(x,x^\loz)\in X\times X^\smalllozenge : x\in D(T), x^\loz \in T(x)\}$.

An operator $T:X\multimap X^\smalllozenge$ is called \textit{monotone} if every $(x,x^\loz),\,(y,y^\loz) \in \gra(T)$ are monotonically related, i.e.   $\gra(T) \subseteq \gra(T)^\mu$. See \cite[Proposition 3.16]{MR0}.
}\end{definition}

\begin{remark} Let $(\mathcal{H},\langg{\cdot}\,|\,{\cdot}\rangg)$ be a Hilbert space, $\mathcal{H}^*$ be its dual space (as dual of a Hadamard space)  and $T:\mathcal{H} \multimap \mathcal{H}^*=\mathcal{H}^\smalllozenge$ be a set-valued operator. Let $(x,x^*) \in  {\gra(T)}$ and $(y,y^*) \in  {\gra(T)}$. Using surjectivity of $j_{_0}$, there exist $u \in \mathcal{H}$ and $v \in \mathcal{H}$ such that $j_{_0} (u)=x^*$ and $j_{_0} (v)=y^*$.
Then\vspace{-0.1cm}
\begin{align*}
\langle x^*-y^*,\overrightarrow{yx}\rangle &=\langle j_{_0} (u)-j_{_0} (v), \overrightarrow{yx} \rangle=\langle \big[\overrightarrow{0u}\big]-\big[\overrightarrow{0v}\big], \overrightarrow{yx} \rangle\\
&=\langle \overrightarrow{0u}-\overrightarrow{0v}, \overrightarrow{yx} \rangle=\langle \overrightarrow{vu}, \overrightarrow{yx} \rangle\\
&=\langg{u-v}\,|\,{x-y}\rangg\\
&=\langg{ j_{_0}^{-1} (x^*)-j_{_0}^{-1} (y^*)\,|\,}{x-y}\rangg.
\end{align*}
Hence $T:\mathcal{H} \multimap \mathcal{H}^*$ is a monotone operator in the sense of Definition \ref{monpol} if and only if $j_{_0}^{-1} \circ T:\mathcal{H} \multimap \mathcal{H}$ is monotone in the classical sense.
\end{remark}

\begin{example}\label{monhyp}
Suppose that $(X, d)$ is a Hadamard space, $a\in X$ and $t_0>0$. Consider $T:X\rightarrow  X^\smalllozenge$ defined by  $T(x)=[t_0\overrightarrow{ax}]$ for each $x\in X$. Then
\[
\langle T(x)-T(y), \overrightarrow{yx}\rangle=\langle [t_0\overrightarrow{ax}]-[t_0\overrightarrow{ay}], \overrightarrow{yx}\rangle=t_0d(x, y)^2\geq0,
\]
i.e., $T$ is monotone.
\end{example}

 \begin{definition}{\rm{\rm\cite[Definition 4.2]{MRMA2}}}{\rm\,A monotone operator $T:X\multimap X^\smalllozenge$ is called \textit{maximal} if there is no monotone operator $S:X\multimap X^\smalllozenge$  that $\gra(S)$ properly contains $\gra(T)$, in other words $\gra(T)=\gra(T)^\mu$, (for details, see \cite[Proposition 3.16]{MR0}).
}
\end{definition}
As it is noted in \cite[Proposition 3.6]{MR0}, using Zorn's lemma, one can deduce that any monotone operator in Hadamard spaces can be extended to a maximal monotone operator. The set of all maximal monotone operators $T:X\multimap X^\smalllozenge$ is denoted by $\mathfrak{M}(X)$.

 \begin{example}  Let $X$ be as in Example \ref{ExMonS}. Consider $T:X\multimap X^\smalllozenge$ with
 \[\gra(T)= \big\{\big(x_n,\big[\overrightarrow{x_{n+1}y_n}\big]\big):n\in\mathbb{N}\big\},\] where $x_n=[(n,1)]$ and $y_n=[(n,0)]$. Then $T$ is monotone but not maximal monotone. See \cite[Example 3.5(i)]{MR0} for details.
\end{example}

 We remind that the Fitzpatrick transform $\mathbf{F}_T:\mathcal{H} \times\mathcal{H}\rightarrow \overline{\mathbb{R}}$ of the set-valued operator $T:\mathcal{H} \multimap \mathcal{H}$ was defined by (\cite{Borwein})
\[\mathbf{F}_T(x,z):= \langg{x\,}|{\,z}\rangg-\inf_{(y,w) \in \gra(T)}\langg{x-y\,}|{\,z-w}\rangg,~~(x,z\in\mathcal{H}).\]

\begin{definition}{\rm{\rm\cite[Definition 5.1]{MRMA2}}}\label{fitzdef} {\rm\, Let $X$ be a Hadamard space and $M \subseteq X \times X^\smalllozenge$. We define  the \textit{$p$-Fitzpatrick transform}  of $M$ as follows:}
\begin{align*}
 \Phi^p_M:&X \times X^\smalllozenge \rightarrow \overline{\mathbb{R}}\\
&(x, x^\loz)\mapsto\sup_{(y,y^\loz)\in M}\big\{\langle x^\loz,\overrightarrow{py}\rangle-\langle y^\loz,\overrightarrow{xy}\rangle\big\}.
 \end{align*}
\end{definition}
Given any operator $T:X \multimap X^\smalllozenge$, the \textit{$p$-Fitzpatrick transform} of $T$ is  $\Phi^p_T:=\Phi^p_{\gra(T)}$.
In other words,
\[\Phi^p_T:X \times X^\smalllozenge \rightarrow \overline{\mathbb{R}},\,(x, x^\loz)\mapsto\sup_{(y,y^\loz)\in \gra(T)}\big\{\langle x^\loz,\overrightarrow{py}\rangle-\langle y^\loz,\overrightarrow{xy}\rangle\big\},~~\big((x, x^\loz)\in X \times X^\smalllozenge\big).\]
\begin{remark}\label{ntsfitz}
\begin{enumerate}\setlength\itemsep{0.05em}
\item[{\rm(i)}]Note that, if\, $\Dom(T)=\emptyset$, then by the convention $\sup \emptyset=-\infty$, we get $\Phi^p_T=-\infty$.
\item[{\rm(ii)}] If $\Dom(T)\neq\emptyset$, then $-\infty\notin \Phi^p_T(X\times X^\smalllozenge)$.
\item[{\rm(iii)}] It follows from {\rm\cite[Proposition 5.2]{MRMA2}} that, for each $(x,x^\loz)\in X \times X^\smalllozenge$, we have:
\begin{equation}\label{FitzInf}
\Phi^p_T(x,x^\loz)=\pi_p(x, x^\loz)-\inf_{(y,y^\loz)\in {\gra(T)}}\langle x^\loz-y^\loz ,\overrightarrow{yx}\rangle.
\end{equation}
\item[{\rm(iv)}] $\Phi^p_T$ is proper and l.s.c. {\rm\cite[Proposition 5.4(i)]{MRMA2}}.
\end{enumerate}
\end{remark}
\begin{remark}\label{AK03}(i) Assume that $X$ is a closed and convex subset of the Hilbert space $(\mathcal{H},\langg{\cdot}\,|\,{\cdot}\rangg)$ with $a \in \Int(X)$ and let $T:X \multimap X^\smalllozenge$ be a set-valued operator. For each $(x,x^\loz) \in X \times X^\smalllozenge$,  using surjectivity of $j_a$, there exists $u \in \mathcal{H}$ such that $j_a (u)=x^\loz$.
Likewise, for each $(y,y^\loz)\in {\gra(T)}$, there exists $v \in \mathcal{H}$ such that $j_a (v)=y^\loz$.
Then
\begin{align*}
\Phi^p_T(x,x^\loz)&=\Phi^p_T(x,j_a (u))\\&=\pi_p(x, x^\loz)-\inf_{(y,y^\loz)\in {\gra(T)}}\langle x^\loz-y^\loz ,\overrightarrow{yx}\rangle\\
&=\langle j_a (u), \overrightarrow{px} \rangle-\inf_{j_a (v)\in {T(y)}}\langle j_a (u)-j_a (v), \overrightarrow{yx} \rangle\\
&=\langg{u\,}|{\,x-p}\rangg -\inf_{j_a (v)\in {T(y)}} \langg{u-v\,}|{\,x-y}\rangg\\
&=\big(\langg{u\,}|{\,x}\rangg -\inf_{v \in (j_a ^{-1} \circ T)(y)}\langg{u-v\,}|{\,x-y}\rangg\big)-\langg{u\,}|{\,p}\rangg\\
&=\mathbf{F}_{j_a ^{-1} \circ T}(x,u)-\langg{u\,}|{\,p}\rangg\\
&=\mathbf{F}_{j_a ^{-1} \circ T}\big(x,j_a ^{-1}(x^\loz)\big)-\langg{j_a ^{-1}(x^\loz)\,}|{\,p}\rangg.
\end{align*}

(ii) As a special case of (i), when $X=\mathcal{H}$ and $p=0$,  we have:
\begin{align*}\Phi^0_T(x,x^\loz)&=\Phi^0_T(x,j_0 (u))=\Phi^0_T\big(x,[\overrightarrow{0u}]\big)=\mathbf{F}_{j_0 ^{-1} \circ T}(x,u)-\langg{u\,}|{\,0}\rangg
=\mathbf{F}_{j_0 ^{-1} \circ T}\big(x,j_0 ^{-1}(x^\loz)\big).
\end{align*}
Similar to Remark \ref{AK01}(ii), using Remark \ref{AK01}(ii), we can write $\Phi^0_T=\mathbf{F}_{T}$.
\end{remark}
\begin{example} Let $X$ be as in Example \ref{ExMonS}.
For each $n\in \mathbb{N}$, set $x_n:=\big[\big(n,\frac{1}{n}\big)\big]$.
Let $T:X \multimap X^\smalllozenge$ be the operator with
\[\gra(T):=\big\{(x_n,[\overrightarrow{x_nx_{n+1}}]):n\in \mathbb{N}\big\}\subseteq X\times X^\smalllozenge.\]
Let $p:=[(n_0,t_0)], x:=[(1,0)] \in X$, $x^\loz:=\Big[\overrightarrow{\big(2,\frac{2}{3}\big)(3,1)}\Big] \in X^\smalllozenge$ and $(y, y^\loz)\in \gra(T)$. Then there is $n\in\mathbb{N}$ such that $y=(n,\frac{1}{n})$ and \[y^\loz=\Big[\overrightarrow{\Big(n,\sfrac{1}{n}\Big)\Big(n+1,\sfrac{1}{n+1}\Big)}\Big].\] Therefore,
\begin{align*}
\langle x^\loz-y^\loz ,\overrightarrow{yx}\rangle &=\langle x^\loz,\overrightarrow{yx}\rangle-\langle y^\loz ,\overrightarrow{yx}\rangle\\
&=\Big\langle \overrightarrow{\big(2,\ssfrac{2}{3}\big)(3,1)},\overrightarrow{\big(n,\ssfrac{1}{n}\big)(1,0)}\Big\rangle-\Big\langle \overrightarrow{\big(n,\ssfrac{1}{n}\big)\big(n+1,\ssfrac{1}{n+1}\big)} ,\overrightarrow{\big(n,\ssfrac{1}{n}\big)(1,0)}\Big\rangle\\
&=\begin{cases}
-\frac{5}{9} & n=3,
\\[1mm]
\frac{1}{3n}    &n\notin \{2,3\},
\\[1mm]
\frac{5}{6} & n=2,
\end{cases}
~~-
\begin{cases}
\frac{3}{2} & n=1,
\\[1mm]
\frac{2n+1}{n^2(n+1)}   &n \neq 1,
\end{cases}=
\begin{cases}
-\frac{7}{6} & n=1,
\\
\frac{5}{12}   &n=2,
\\
-\frac{3}{4} & n=3,
\\
\frac{n^2-5n-3}{3n^2(n+1)}   &o.w.,
\end{cases}
\end{align*}
and
\begin{align*}
\inf_{(y,y^\loz)\in {\gra(T)}}\langle x^\loz-y^\loz ,\overrightarrow{yx}\rangle&=\min\Big\{-\sfrac{7}{6},\sfrac{5}{12} ,-\sfrac{3}{4},\inf_{n \geq 4}\sfrac{n^2-5n-3}{3n^2(n+1)}\Big\}=-\sfrac{7}{6}.
\end{align*}
Moreover,
\begin{align*}
\pi_p(x, x^\loz)&=\langle x^\loz,\overrightarrow{px}\rangle=\Big\langle  \overrightarrow{\big(2,\ssfrac{2}{3}\big)(3,1)}, \overrightarrow{(n_0,t_0)(1,0)}\Big\rangle=\begin{cases}
\frac{5}{3}t_{0} & n_0=2,
\\[1mm]
-\frac{5}{3}t_0 & n_0=3,
\\[1mm]
\frac{1}{3}t_0    &n_0\notin \{2,3\}.
\end{cases}
\end{align*}
Hence,
\begin{align*}
\Phi^p_T(x,x^\loz)&=\pi_p(x, x^\loz)-\inf_{(y,y^\loz)\in {\gra(T)}}\langle x^\loz-y^\loz ,\overrightarrow{yx}\rangle=\begin{cases}
\frac{5}{3}t_0+\frac{7}{6} & n_0=2,
\\[1mm]
-\frac{5}{3}t_0+\frac{7}{6} & n_0=3,
\\[1mm]
\frac{1}{3}t_0+\frac{7}{6}    &n_0\notin \{2,3\}.
\end{cases}
\end{align*}
\end{example}
\begin{example} Consider the operator $T: \mathbb{H}^2\multimap \big(\mathbb{H}^2\big)^\smalllozenge $ with the graph
\[\gra(T):=\Big\{\big(\alpha(t),\big[\overrightarrow{\alpha(t)\alpha(t+1)}\big]\big):\,t \geq 0\Big\},\]
where $\alpha(t):=(\sinh t, 0, \cosh t)$. Take $p=(1,-1,\sqrt{3})$ and  $x=(0,0,1)$ in the real hyperbolic $2$-space $\mathbb{H}^2$ and $x^\loz=\Big[\overrightarrow{\big(1,0,\sqrt{2}\big)\big(0,-1,\sqrt{2}\big)}\Big] \in \big(\mathbb{H}^2\big)^\smalllozenge$. Then
\begin{align*}
\langle x^\loz, \overrightarrow{px} \rangle
&=\Big\langle \Big[\overrightarrow{\big(1,0,\sqrt{2}\big)\big(0,-1,\sqrt{2}\big)}\Big], \overrightarrow{\big(1,-1,\sqrt{3}\big)(0,0,1)}\Big \rangle\\
&=\Big\langle \overrightarrow{\big(1,0,\sqrt{2}\big)\big(0,-1,\sqrt{2}\big)}, \overrightarrow{\big(1,-1,\sqrt{3}\big)(0,0,1)}\Big \rangle\\
&\!=\!\frac{1}{2}\!\Big(\!\!\big(\!\cosh^{-1}(\!\sqrt{2})\!\big)^2\!\!+\!\!\big(\!\cosh^{-1}(\!\sqrt{6}\!\!-\!\!1)\!\big)^2\!-\!\big(\!\cosh^{-1}(\!\sqrt{6}\!-\!1)\!\big)^2\!\!-\!\!\big(\!\cosh^{-1}(\!\sqrt{2})\!\big)^2\!\Big)\\
&=0.
\end{align*}
For each $(y,y^\loz)=\big(\alpha(t),\big[\overrightarrow{\alpha(t)\alpha(t+1)}\big]\big) \in \gra(T)$,
\begin{align*}
\langle x^\loz,\overrightarrow{yx}\rangle &=\Big\langle \Big[\overrightarrow{\big(1,0,\sqrt{2}\big)\big(0,-1,\sqrt{2}\big)}\Big], \overrightarrow{(\sinh t, 0, \cosh t)(0,0,1)}\Big \rangle\\
&=\Big\langle \overrightarrow{\big(1,0,\sqrt{2}\big)\big(0,-1,\sqrt{2}\big)}, \overrightarrow{(\sinh t, 0, \cosh t)(0,0,1)}\Big \rangle\\
&=\sfrac{1}{2}\Big(\big(\cosh^{-1}(\sqrt{2})\big)^2+\big(\cosh^{-1}(\sqrt{2}\cosh t)\big)^2\\
&~~~~~~~~~-\big(\cosh^{-1}(\sqrt{2}\cosh t-\sinh t)\big)^2-\big(\cosh^{-1}(\sqrt{2})\big)^2\Big)\\
&=\sfrac{1}{2}\Big(\big(\cosh^{-1}(\sqrt{2}\cosh t)\big)^2-\big(\cosh^{-1}(\sqrt{2}\cosh t-\sinh t)\big)^2\Big).
\end{align*}
On the other hand,
\begin{align*}
\langle y^\loz,\overrightarrow{yx}\rangle &=\Big\langle \Big[\overrightarrow{(\sinh t, 0, \cosh t)(\sinh (t+1), 0, \cosh (t+1))}\Big], \overrightarrow{(\sinh t, 0, \cosh t)(0,0,1)}\Big \rangle\\[1mm]
&=\Big\langle \overrightarrow{(\sinh t, 0, \cosh t)(\sinh (t+1), 0, \cosh (t+1))}, \overrightarrow{(\sinh t, 0, \cosh t)(0,0,1)}\Big \rangle\\[1mm]
&=\sfrac{1}{2}\Big(\big(\cosh^{-1}(\cosh t)\big)^2+\big(\cosh^{-1}\big(\cosh (t+1)\cosh t-\sinh (t+1)\sinh t\big)\big)^2\\[1mm]
&~~~~~-\big(\cosh^{-1}\big(\cosh (t+1)\big)\big)^2\Big)\\[1mm]
&=\sfrac{1}{2}\big(t^2+1-(t+1)^2\big)\\[1mm]
&=-t.
\end{align*}
Therefore,
\begin{align*}
\Phi^p_T(x,x^\loz)&=\pi_p(x, x^\loz)-\inf_{(y,y^\loz)\in {\gra(T)}}\langle x^\loz-y^\loz ,\overrightarrow{yx}\rangle\\
&=-\inf_{t \geq 0} \Big\{\sfrac{1}{2}\Big(\big(\cosh^{-1}(\sqrt{2}\cosh t)\big)^2-\big(\cosh^{-1}(\sqrt{2}\cosh t-\sinh t)\big)^2\Big)+t\Big\}\\
&=0.
\end{align*}
Thus $\Phi^p_T\equiv0$.
\end{example}
\begin{proposition}\label{fitzflconvex} 
Let  $T:X \multimap X^\smalllozenge$ be an operator.
\begin{enumerate}\setlength\itemsep{0.15em}
\item[{\rm(i)}] Let $a,b\in X$ be such that $\{a, b\}\times\Ran(T)$ has the $\mathcal{F}_l$-property. Then \[\Phi^p_T(c(\lambda),c^\loz(\lambda))\leq(1-\lambda)\Phi^p_T(a,a^\loz)+\lambda\Phi^p_T(b,b^\loz),\]
where $a^\loz, b^\loz\in X^\smalllozenge$, $c(\lambda):=(1-\lambda)a\oplus \lambda b$ and $c^\loz(\lambda):=(1-\lambda)a^\loz+ \lambda b^\loz$.
\item[{\rm(ii)}] Let $C \subseteq X \times X^\smalllozenge$ be a convex set with the $\mathcal{F}_l$-property and $\Ran(T) \subseteq \Ran(C)$. Then $\Phi^p_T+\iota_C$ is convex.
\item[{\rm(iii)}] $\Phi^p_T$ is convex if $X \times \Ran(T)$ has the $\mathcal{F}_l$-property.
\item[{\rm(iv)}] $\Phi^p_T$ is convex if $X$ is a flat Hadamard space.
\end{enumerate}
\end{proposition}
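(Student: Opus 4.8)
The plan is to realize $\Phi^p_T$ as a pointwise supremum of summands each of which is affine in $(x,x^\loz)$ plus a single copy of the coupling function $\pi_p(\cdot,y^\loz)$, and then to observe that the $\mathcal{F}_l$-property is exactly the assertion that this coupling term is convex in its base point. Concretely, using the through-$p$ additivity identity proved in \cite[Remark 1.3.]{MR0} together with $\langle y^\loz,\overrightarrow{xp}\rangle=-\langle y^\loz,\overrightarrow{px}\rangle$, I would first rewrite the summand defining $\Phi^p_T$ as
\[
\langle x^\loz,\overrightarrow{py}\rangle-\langle y^\loz,\overrightarrow{xy}\rangle=\langle x^\loz,\overrightarrow{py}\rangle+\langle y^\loz,\overrightarrow{px}\rangle-\langle y^\loz,\overrightarrow{py}\rangle .
\]
In this form the first and third terms are affine in $x^\loz$ and independent of $x$, while the middle term is precisely $\pi_p(x,y^\loz)$, isolating all the nonlinearity of the problem into the coupling function.

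For part~(i) I would fix $(y,y^\loz)\in\gra(T)$ and evaluate the rewritten summand at $(c(\lambda),c^\loz(\lambda))$. Linearity in the dual slot \eqref{efxlozonab1} splits $\langle c^\loz(\lambda),\overrightarrow{py}\rangle$ as $(1-\lambda)\langle a^\loz,\overrightarrow{py}\rangle+\lambda\langle b^\loz,\overrightarrow{py}\rangle$, and the constant term $-\langle y^\loz,\overrightarrow{py}\rangle$ splits trivially. The only inequality enters through the coupling term: since $y^\loz\in\Ran(T)=\Ran(\{a,b\}\times\Ran(T))$ and $a,b$ exhaust $\Dom(\{a,b\}\times\Ran(T))$, the $\mathcal{F}_l$-property \eqref{EqWp} yields $\langle y^\loz,\overrightarrow{pc(\lambda)}\rangle\leq(1-\lambda)\langle y^\loz,\overrightarrow{pa}\rangle+\lambda\langle y^\loz,\overrightarrow{pb}\rangle$. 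Adding the three contributions shows the summand at $(c(\lambda),c^\loz(\lambda))$ is bounded by the convex combination of the summands at $(a,a^\loz)$ and $(b,b^\loz)$, each of which is in turn dominated by $\Phi^p_T(a,a^\loz)$ and $\Phi^p_T(b,b^\loz)$; taking the supremum over $(y,y^\loz)\in\gra(T)$ gives the claim.

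Parts~(ii)--(iv) then follow by feeding the right index set into~(i). For~(ii) I would take $(a,a^\loz),(b,b^\loz)\in C$ (otherwise the inequality for $\Phi^p_T+\iota_C$ is vacuous, its right-hand side being $+\infty$); convexity of $C$ places $(c(\lambda),c^\loz(\lambda))\in C$ so that $\iota_C$ vanishes there, while $\Ran(T)\subseteq\Ran(C)$ and the $\mathcal{F}_l$-property of $C$ supply exactly the hypothesis of~(i) for the pair $a,b\in\Dom(C)$. Part~(iii) is the special case $C=X\times\Ran(T)$, where $\Dom(C)=X$, so~(i) applies to every pair $a,b\in X$ and delivers full convexity of $\Phi^p_T$. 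Finally,~(iv) follows from~(iii) through Theorem~\ref{frtg}: flatness of $X$ forces every subset of $X\times X^\smalllozenge$, in particular $X\times\Ran(T)$, to possess the $\mathcal{F}_l$-property.

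The only delicate point I anticipate is the bound-vector bookkeeping in the opening rewrite, namely using $-\overrightarrow{xy}=\overrightarrow{yx}$ and through-$p$ additivity so that $\pi_p(\cdot,y^\loz)$ is cleanly extracted; once that is in place, the $\mathcal{F}_l$-property does all of the analytic work and the supremum-of-convex-functions principle closes the argument. No $\pm\infty$ complication arises in the core computation, since for $(y,y^\loz)\in\gra(T)$ the summands are finite real numbers, and properness of $\Phi^p_T$ from Remark~\ref{ntsfitz} guarantees the suprema behave well.
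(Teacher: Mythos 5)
Your proposal is correct and takes essentially the same route as the paper's proof: split the defining summand by linearity in the dual slot, apply the $\mathcal{F}_l$-property to the base-point term for each fixed $(y,y^\loz)\in\gra(T)$, split the supremum, and then reduce (ii)--(iv) to (i) exactly as the paper does. The only cosmetic difference is that the paper applies the $\mathcal{F}_l$-property with base point $y$ (implicitly invoking its independence of the choice of base point), whereas you first rewrite the summand through $p$ via the additivity identity of \cite[Remark 1.3]{MR0} and apply the property verbatim at $p$; both are equally valid.
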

\begin{description}\setlength\itemsep{0.25em}\setlength{\itemindent}{-1.5em}
\item[{\textit{\textbf{Proof.}}} {\rm(i)}] By the definition of $\Phi^p_T$ and the $\mathcal{F}_l$-property we have:
\begin{align*}
\Phi^p_T&(c(\lambda),c^\loz(\lambda))=\sup_{(y,y^\loz)\in \gra(T)}\big\{\langle c^\loz(\lambda),\overrightarrow{py}\rangle-\langle y^\loz,\overrightarrow{c(\lambda)y}\rangle\big\}\\
&=\sup_{(y,y^\loz)\in \gra(T)}\big\{\langle c^\loz(\lambda),\overrightarrow{py}\rangle+\langle y^\loz,\overrightarrow{yc(\lambda)}\rangle\big\}\\
&\leq\sup_{(y,y^\loz)\in \gra(T)}\big\{(1-\lambda)\langle a^\loz,\overrightarrow{py}\rangle+\lambda \langle b^\loz,\overrightarrow{py}\rangle+(1-\lambda)\langle y^\loz,\overrightarrow{ya}\rangle+\lambda \langle y^\loz,\overrightarrow{yb}\rangle\big\}\\
&\leq(1-\lambda)\!\!\sup_{(y,y^\loz)\in \gra(T)}\!\!\big\{\langle a^\loz,\overrightarrow{py}\rangle-\langle y^\loz,\overrightarrow{ay}\rangle\big\}\!+\!\lambda\!\!\sup_{(y,y^\loz)\in \gra(T)}\!\!\big\{\langle b^\loz,\overrightarrow{py}\rangle\!-\!\langle y^\loz,\overrightarrow{by}\rangle\big\}\\
&=(1-\lambda)\Phi^p_T(a,a^\loz)+\lambda\Phi^p_T(b,b^\loz),
\end{align*}
and hence (i) is proved.

\item[{\rm (ii)}]  Suppose that $(a,a^\loz),(b,b^\loz) \in C$  are arbitrary and fixed. For each $\lambda \in [0,1]$, set
$c(\lambda)\!:=\!(1-\lambda)a\oplus \lambda b$ and $c^\loz(\lambda)\!:=\!(1-\lambda)a^\loz\!+\! \lambda b^\loz$. In view of (i), we get:
\begin{align*}
(\Phi^p_T+\iota_C)(c(\lambda),c^\loz(\lambda))&=\Phi^p_T(c(\lambda),c^\loz(\lambda))\\
&\leq(1-\lambda)\Phi^p_T(a,a^\loz)+\lambda\Phi^p_T(b,b^\loz)\\
&=(1-\lambda)(\Phi^p_T+\iota_C)(a,a^\loz)+\lambda(\Phi^p_T+\iota_C)(b,b^\loz),
\end{align*}
which it implies that $\Phi^p_T+\iota_C$ is convex.

\item[{\rm(iii)}]  The proof is similar to that of (ii).

\item[{\rm(iv)}] It is an immediate consequence of (iii) and Theorem \ref{frtg}. Also, it can be deduced from (ii).
\end{description}

The following result is a direct consequence of \cite[Proposition 5.3]{MRMA2}.
\begin{proposition}\label{MainFpM}
 Let $T:X\multimap X^\smalllozenge$. Then
 \begin{enumerate}\setlength\itemsep{0.15em}
 \item[{\rm(i)}] $\gra(T)^{\mu}=\{ \Phi^p_T\leq\pi_p\}$.
\item[{\rm(ii)}] If $T$ is monotone, then $\gra(T)\subseteq \{ \Phi^p_T=\pi_p\}$.
\item[{\rm(iii)}] If $T$ is maximal monotone, then  $X\!\times\! X^\smalllozenge\!\setminus \gra(T)\!\subseteq\!\{ \Phi^p_T\!>\!\pi_p\}$; in other words, $\{ \Phi^p_T\!\leq\!\pi_p\}\!\subseteq\!\gra(T)$.
\item[{\rm(iv)}] If $T$ is maximal monotone, then $\{ \Phi^p_T\geq\pi_p\}=X\times X^\smalllozenge$.
\item[{\rm(v)}] If $T$ is maximal monotone, then   $\{ \Phi^p_T=\pi_p\}=\{\Phi^p_T\leq\pi_p\}=\gra(T)$.
\item[{\rm(vi)}] If $\{ \Phi^p_T=\pi_p\}=\gra(T)$ and $\{ \Phi^p_T\geq\pi_p\}=X\times X^\smalllozenge$, then $T$ is maximal monotone.
\end{enumerate}
\end{proposition}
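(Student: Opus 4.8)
The plan is to derive all six items from the single infimum representation \eqref{FitzInf} of Remark \ref{ntsfitz}(iii), namely
\[\Phi^p_T(x,x^\loz)=\pi_p(x,x^\loz)-\inf_{(y,y^\loz)\in\gra(T)}\langle x^\loz-y^\loz,\overrightarrow{yx}\rangle,\]
combined with the definition of the monotone polar (Definition \ref{monpol}). The elementary observation that drives everything is that an infimum of a family of reals is nonnegative precisely when every member of the family is nonnegative; hence for a fixed $(x,x^\loz)\in X\times X^\smalllozenge$,
\[\Phi^p_T(x,x^\loz)\leq\pi_p(x,x^\loz)\;\Longleftrightarrow\;\inf_{(y,y^\loz)\in\gra(T)}\langle x^\loz-y^\loz,\overrightarrow{yx}\rangle\geq0\;\Longleftrightarrow\;(x,x^\loz)\,\mu\,\gra(T).\]
The last equivalence says exactly that $(x,x^\loz)\in\gra(T)^\mu$, so this chain already proves (i): $\{\Phi^p_T\leq\pi_p\}=\gra(T)^\mu$.

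For (ii) and (iii) I would add the \emph{diagonal substitution}: whenever $(x,x^\loz)\in\gra(T)$, taking $(y,y^\loz)=(x,x^\loz)$ inside the infimum in \eqref{FitzInf} produces the value $\langle x^\loz-x^\loz,\overrightarrow{xx}\rangle=0$, so the infimum is $\leq 0$ there and therefore $\Phi^p_T\geq\pi_p$ holds on all of $\gra(T)$. If $T$ is monotone then $\gra(T)\subseteq\gra(T)^\mu=\{\Phi^p_T\leq\pi_p\}$ by (i), giving the reverse inequality on $\gra(T)$; the two together yield $\gra(T)\subseteq\{\Phi^p_T=\pi_p\}$, which is (ii). If $T$ is maximal monotone then $\gra(T)=\gra(T)^\mu$, and (i) turns this into $\{\Phi^p_T\leq\pi_p\}=\gra(T)$, whose two set-inclusion readings are precisely the assertions of (iii).

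Item (iv) is the one place where a short argument by contradiction is needed, and it is the step I expect to demand the most care. Suppose some $(x,x^\loz)$ satisfied $\Phi^p_T(x,x^\loz)<\pi_p(x,x^\loz)$; then by \eqref{FitzInf} the infimum would be strictly positive, hence every value $\langle x^\loz-y^\loz,\overrightarrow{yx}\rangle$ would be strictly positive, forcing $(x,x^\loz)\,\mu\,\gra(T)$, i.e. $(x,x^\loz)\in\gra(T)^\mu=\gra(T)$ by maximality. But the diagonal substitution then forces the same infimum to be $\leq 0$, a contradiction; thus $\Phi^p_T\geq\pi_p$ everywhere, which is (iv). Items (v) and (vi) are bookkeeping on top of what precedes. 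For (v), part (iv) forces $\{\Phi^p_T\leq\pi_p\}=\{\Phi^p_T=\pi_p\}$, while (i) together with maximality identifies this common set with $\gra(T)$. For (vi), the hypothesis $\{\Phi^p_T\geq\pi_p\}=X\times X^\smalllozenge$ again collapses $\{\Phi^p_T\leq\pi_p\}$ to $\{\Phi^p_T=\pi_p\}$, which by the remaining hypothesis equals $\gra(T)$; since (i) gives $\gra(T)^\mu=\{\Phi^p_T\leq\pi_p\}$, we conclude $\gra(T)^\mu=\gra(T)$, i.e. $T$ is maximal monotone. No genuinely deep obstacle arises here: every item is an algebraic consequence of \eqref{FitzInf}, so the only real risk is mishandling strict-versus-weak inequalities and the empty-graph convention $\inf\emptyset=+\infty$, $\Phi^p_T\equiv-\infty$, which must be checked to keep the equivalences valid in the degenerate case $\Dom(T)=\emptyset$.
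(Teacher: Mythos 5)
Your proposal is correct, but it is worth noting that the paper never proves Proposition \ref{MainFpM} at all: it is stated as a direct consequence of \cite[Proposition 5.3]{MRMA2}, so the "paper's proof" is a citation, not an argument. What you have produced is a self-contained derivation from the representation \eqref{FitzInf} of Remark \ref{ntsfitz}(iii) (itself quoted from \cite[Proposition 5.2]{MRMA2}), the definition of the monotone polar, and the identification of maximality with $\gra(T)=\gra(T)^{\mu}$; this fills in exactly what the paper delegates to the reference. Your steps check out: (i) is the equivalence "$\inf\geq 0$ iff every term is $\geq 0$" applied to \eqref{FitzInf}; the diagonal substitution correctly forces $\Phi^p_T\geq\pi_p$ on $\gra(T)$, which together with (i) yields (ii), and maximality upgrades (i) to (iii) and, with (iv), to (v); (vi) is the converse bookkeeping, since the two hypotheses give $\gra(T)^{\mu}=\{\Phi^p_T\leq\pi_p\}=\{\Phi^p_T=\pi_p\}=\gra(T)$. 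Two minor observations: in (iv) the contradiction is avoidable --- for any $(x,x^{\loz})$, either $(x,x^{\loz})\in\gra(T)^{\mu}=\gra(T)$, where the diagonal substitution and (i) give equality, or $(x,x^{\loz})\notin\{\Phi^p_T\leq\pi_p\}$, where the inequality is strict --- and the degenerate case $\gra(T)=\emptyset$, which you flag but do not carry out, does go through: there $\Phi^p_T\equiv-\infty$ makes (i) and (ii) trivial, an empty-graph operator is never maximal (any singleton is a proper monotone extension), and the hypotheses of (vi) are unsatisfiable since $\{\Phi^p_T\geq\pi_p\}$ would be empty. So your argument is complete and, unlike the paper, makes the result verifiable without consulting \cite{MRMA2}.
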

\begin{corollary}\label{maxFitz-Iden}
Let $T, S\in\mathfrak{M}(X)$ and $ \Phi^p_T= \Phi^p_S$. Then $T=S$.
\end{corollary}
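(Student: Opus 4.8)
The plan is to reduce the equality of the two operators to the equality of their graphs, and then to realize each graph as a single level set of the corresponding $p$-Fitzpatrick transform. First I would note that a set-valued operator $T:X\multimap X^\smalllozenge$ is completely determined by $\gra(T)$, so it suffices to establish $\gra(T)=\gra(S)$.

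The central ingredient is Proposition \ref{MainFpM}(v). Since $T$ is maximal monotone, that result gives $\gra(T)=\{\Phi^p_T=\pi_p\}$, and likewise, since $S$ is maximal monotone, $\gra(S)=\{\Phi^p_S=\pi_p\}$. The point here is that the fixed base point $p\in X$ and hence the $p$-coupling function $\pi_p$ are common to both characterizations, so the two graphs are expressed as level sets of their respective transforms relative to one and the same function $\pi_p$. This is where the hypotheses $T,S\in\mathfrak{M}(X)$ are genuinely used: the graph-as-level-set identity in Proposition \ref{MainFpM}(v) is valid precisely for maximal monotone operators.

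Next I would invoke the standing assumption $\Phi^p_T=\Phi^p_S$. Because the two transforms agree as extended real-valued functions on $X\times X^\smalllozenge$, the sets where each equals the fixed function $\pi_p$ must coincide, that is, $\{\Phi^p_T=\pi_p\}=\{\Phi^p_S=\pi_p\}$. Chaining this with the two displayed identities from Proposition \ref{MainFpM}(v) yields $\gra(T)=\gra(S)$, and therefore $T=S$.

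I do not anticipate any real obstacle: the corollary is a direct formal consequence of the level-set characterization of maximal monotone graphs, and the only subtlety to keep in mind is that this characterization is available for \emph{both} operators exactly because both are assumed maximal monotone. In particular, no monotonicity-extension argument (Zorn's lemma) or flatness hypothesis is needed here; the result is purely a matter of comparing the $\pi_p$-level sets of the coinciding transforms.
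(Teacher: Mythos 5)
Your proof is correct and is essentially identical to the paper's own argument: both invoke Proposition \ref{MainFpM}(v) to write $\gra(T)=\{\Phi^p_T=\pi_p\}$ and $\gra(S)=\{\Phi^p_S=\pi_p\}$, then conclude $\gra(T)=\gra(S)$ from $\Phi^p_T=\Phi^p_S$. Your added remarks on where maximality is used and why no flatness or Zorn's lemma argument is needed are accurate but not part of the paper's (one-line) proof.
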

\begin{proof} Let $T$ and $S$ be maximal monotone operators with $\Phi^p_T= \Phi^p_S$. It follows from Proposition \ref{MainFpM}(v) that
\begin{equation*}
\gra(T)=\{\Phi^p_T=\pi_p\}=\{ \Phi^p_S=\pi_p\}=\gra(S).
\end{equation*}
\end{proof}

\begin{proposition}\label{phitx111} Let  $T\!:\!X\!\multimap\!X^\smalllozenge$ be an operator with $ \gra(T)\!\neq\!\emptyset$. The following assertions hold.
\begin{enumerate}\setlength\itemsep{0.15em}
\item[{\rm(i)}] $\Phi^p_T=(\iota_{\gra(T)} +\pi_p)_p^{\loz}\circ r $.
\item[{\rm(ii)}] $(\Phi^p_T)_p^{\loz} \circ r \leq \pi_p+ \iota_{\gra(T)}$.
\item[{\rm(iii)}] If $T$ is monotone, then $\Phi^p_T \leq (\Phi^p_T)_p^{\loz} \circ r \leq \pi_p+ \iota_{\gra(T)}$, with equality in  $ \gra(T)$.
\item[{\rm(iv)}] If $T$ is monotone and $\{\Phi^p_T\!\leq\!\pi_p\}\!=\!\gra(T)$,
then \[\Phi^p_T\!=\!\big(\Phi^p_T+\iota_{\{\Phi^p_T\leq \pi_p\}}\big)_p^{\loz}\circ r.\]
\item[{\rm(v)}] Suppose that  $T\in\mathfrak{M}(X)$ and $C:=\{\Phi^p_T= \pi_p\}=\{\Phi^p_T\leq \pi_p\}=\gra(T)$. Then $\Phi^p_T=\big(\Phi^p_T+\iota_{C}\big)_p^{\loz}\circ r$.
\end{enumerate}
\end{proposition}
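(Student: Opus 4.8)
The unifying device I would use is the conjugation-type operator $\mathcal{C}(h):=h_p^{\loz}\circ r$, which sends $h:X\times X^\smalllozenge\to\overline{\mathbb{R}}$ to the function $\mathcal{C}(h)(x,x^\loz)=h_p^{\loz}(x^\loz,x)$ on $X\times X^\smalllozenge$. Because the underlying coupling $\langle x^\loz,\overrightarrow{py}\rangle+\langle y^\loz,\overrightarrow{px}\rangle$ is symmetric in $(x,x^\loz)$ and $(y,y^\loz)$, the operator $\mathcal{C}$ is order-reversing and the $p$-Fenchel--Young inequality (Proposition \ref{FenchelYoungIneqth}(i)) delivers the fundamental estimate $\mathcal{C}(\mathcal{C}(h))\leq h$ for every proper $h$. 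Indeed, rewriting that inequality as $h(x,x^\loz)+\mathcal{C}(h)(y,y^\loz)\geq\langle x^\loz,\overrightarrow{py}\rangle+\langle y^\loz,\overrightarrow{px}\rangle$ and taking the supremum over $(y,y^\loz)$ in the definition of $\mathcal{C}(\mathcal{C}(h))$ forces $\mathcal{C}(\mathcal{C}(h))\leq h$. This single fact will drive parts (ii), (iv) and (v).

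For (i) I would substitute $h=\iota_{\gra(T)}+\pi_p$ directly into the conjugate. The indicator collapses the supremum to $\gra(T)$, on which $h=\pi_p$, so that
\[
\mathcal{C}(\iota_{\gra(T)}+\pi_p)(x,x^\loz)=\sup_{(y,y^\loz)\in\gra(T)}\big\{\langle x^\loz,\overrightarrow{py}\rangle+\langle y^\loz,\overrightarrow{px}\rangle-\langle y^\loz,\overrightarrow{py}\rangle\big\}.
\]
Now the additivity $\langle y^\loz,\overrightarrow{px}\rangle=\langle y^\loz,\overrightarrow{py}\rangle+\langle y^\loz,\overrightarrow{yx}\rangle$ recalled in the preliminaries, together with $\overrightarrow{yx}=-\overrightarrow{xy}$, reduces the bracket to $\langle x^\loz,\overrightarrow{py}\rangle-\langle y^\loz,\overrightarrow{xy}\rangle$, whose supremum over $\gra(T)$ is exactly $\Phi^p_T(x,x^\loz)$. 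Part (ii) is then immediate: by (i) we have $\Phi^p_T=\mathcal{C}(\iota_{\gra(T)}+\pi_p)$, hence $(\Phi^p_T)_p^{\loz}\circ r=\mathcal{C}(\Phi^p_T)=\mathcal{C}(\mathcal{C}(\iota_{\gra(T)}+\pi_p))\leq\iota_{\gra(T)}+\pi_p=\pi_p+\iota_{\gra(T)}$.

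For (iii) the right-hand inequality is just (ii). For the left-hand inequality $\Phi^p_T\leq(\Phi^p_T)_p^{\loz}\circ r$, I would restrict the defining supremum of $(\Phi^p_T)_p^{\loz}(x^\loz,x)$ to pairs $(y,y^\loz)\in\gra(T)$; on $\gra(T)$ monotonicity gives $\Phi^p_T=\pi_p$ (Proposition \ref{MainFpM}(ii)), and repeating the additivity computation of (i) shows this restricted supremum already equals $\Phi^p_T(x,x^\loz)$. Finally, on $\gra(T)$ the chain $\Phi^p_T\leq(\Phi^p_T)_p^{\loz}\circ r\leq\pi_p+\iota_{\gra(T)}=\pi_p=\Phi^p_T$ squeezes every term to equality, giving the asserted equality on the graph.

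For (iv) and (v) the key observation is that the added indicator forces its argument to coincide with $\iota_{\gra(T)}+\pi_p$, after which (i) applies verbatim. Under the hypothesis of (iv) one has $\iota_{\{\Phi^p_T\leq\pi_p\}}=\iota_{\gra(T)}$, and monotonicity gives $\Phi^p_T=\pi_p$ on $\gra(T)$; hence $\Phi^p_T+\iota_{\{\Phi^p_T\leq\pi_p\}}=\pi_p+\iota_{\gra(T)}$, so $(\Phi^p_T+\iota_{\{\Phi^p_T\leq\pi_p\}})_p^{\loz}\circ r=\mathcal{C}(\pi_p+\iota_{\gra(T)})=\Phi^p_T$ by (i). Part (v) is the same computation: for $T\in\mathfrak{M}(X)$, Proposition \ref{MainFpM}(v) supplies $C=\{\Phi^p_T=\pi_p\}=\{\Phi^p_T\leq\pi_p\}=\gra(T)$, so $\iota_C=\iota_{\{\Phi^p_T\leq\pi_p\}}$ and (v) reduces to (iv). I expect the only genuinely delicate step to be the quasilinearization bookkeeping that converts the symmetric coupling into the asymmetric form defining $\Phi^p_T$, and the fact that the left inequality in (iii) truly uses monotonicity via Proposition \ref{MainFpM}(ii); once that identity is in place, every part follows either from the abstract estimate $\mathcal{C}\circ\mathcal{C}\leq\mathrm{Id}$ or from a one-line squeeze.
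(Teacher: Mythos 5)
Your proposal is correct and takes essentially the same route as the paper's proof: part (i) by the same direct quasilinearization computation, part (ii) via the $p$-Fenchel--Young inequality applied to $\iota_{\gra(T)}+\pi_p$ (your abstract estimate $\mathcal{C}\circ\mathcal{C}\leq\mathrm{Id}$ is precisely the paper's argument in packaged form), part (iii) by restricting the conjugate's supremum to $\gra(T)$ and invoking Proposition \ref{MainFpM}(ii), and parts (iv)--(v) by identifying the conjugand with $\pi_p+\iota_{\gra(T)}$ and reducing to earlier parts. The only cosmetic difference is that in (iv) you use the function identity $\Phi^p_T+\iota_{\{\Phi^p_T\leq\pi_p\}}=\pi_p+\iota_{\gra(T)}$ and then cite (i), whereas the paper rewrites the supremum explicitly; the substance is identical.
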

\begin{description}\setlength\itemsep{0.1em}\setlength{\itemindent}{-2.5em}
\item[{\textit{\textbf{Proof.}} {\rm(i)}}]  Let $(x,x^\loz) \in X\times X^\smalllozenge$. Then
\begin{align*}
\Phi^p_T(x,x^{\loz})&=\sup_{(y,y^{\loz})\in \gra (T)}\big\{\langle y^{\loz} ,\overrightarrow{px}\rangle+\langle x^{\loz} ,\overrightarrow{py}\rangle-\langle y^{\loz} ,\overrightarrow{py}\rangle\big\}\\
&=\sup_{(y,y^{\loz})\in X \times X^\smalllozenge}\big\{\langle y^{\loz} ,\overrightarrow{px}\rangle+\langle x^{\loz} ,\overrightarrow{py}\rangle- (\iota_{\gra(T)} +\pi_p)(y,y^{\loz})\big\}\\
&=(\iota_{\gra(T)} +\pi_p)_p^{\loz}(x^{\loz},x)\\
&=\big((\iota_{\gra(T)} +\pi_p)_p^{\loz}\circ r\big)(x,x^{\loz}).
\end{align*}

\item[{\rm(ii)}]
Applying Proposition \ref{FenchelYoungIneqth}(i) to $h:=\iota_{\gra(T)} +\pi_p$, using (i), we get:
\[
(\iota_{\gra(T)} +\pi_p)(x,x^{\loz})+\Phi^p_T(y,y^{\loz})\geq \langle y^{\loz} ,\overrightarrow{px}\rangle+\langle x^{\loz} ,\overrightarrow{py}\rangle, \forall (y,y^{\loz})\in X\times X^\smalllozenge;
\]
i.e.,
\[
(\iota_{\gra(T)} +\pi_p)(x,x^{\loz})\geq \langle y^{\loz} ,\overrightarrow{px}\rangle+\langle x^{\loz} ,\overrightarrow{py}\rangle-\Phi^p_T(y,y^{\loz}), \forall (y,y^{\loz})\in X\times X^\smalllozenge.
\]
Therefore,
\begin{align*}
\big((\Phi^p_T)_p^{\loz} \circ r \big)(x,x^{\loz})&=\sup_{(y,y^{\loz})\in X \times X^\smalllozenge}\big\{\langle y^{\loz} ,\overrightarrow{px}\rangle+\langle x^{\loz} ,\overrightarrow{py}\rangle-\Phi^p_T(y,y^{\loz} )\big\}\\
&\leq(\iota_{\gra(T)} +\pi_p)(x,x^{\loz}).
\end{align*}
\item[{\rm(iii)}]  Let $(x,x^{\loz}) \in X\times X^\smalllozenge$. From Proposition \ref{MainFpM}(ii) and Definition \ref{$p$-Fenchel} we see that
\begin{align*}
\Phi^p_T(x,x^{\loz})&=\sup_{(y,y^{\loz})\in \gra (T)}\big\{\langle y^{\loz} ,\overrightarrow{px}\rangle+\langle x^{\loz} ,\overrightarrow{py}\rangle-\langle y^{\loz} ,\overrightarrow{py}\rangle\big\}\\
&= \sup_{(y,y^{\loz})\in \gra (T)}\big\{\langle y^{\loz} ,\overrightarrow{px}\rangle+\langle x^{\loz} ,\overrightarrow{py}\rangle-\Phi^p_T(y,y^{\loz})\big\}\\
&\leq \sup_{(y,y^{\loz})\in X \times X^\smalllozenge}\big\{\langle y^{\loz} ,\overrightarrow{px}\rangle+\langle x^{\loz} ,\overrightarrow{py}\rangle-\Phi^p_T(y,y^{\loz} )\big\}\\
&=(\Phi^p_T)_p^{\loz}(x^{\loz},x)\\
&=\big((\Phi^p_T)_p^{\loz} \circ r \big)(x,x^{\loz}).
\end{align*}
Thus $\Phi^p_T \leq (\Phi^p_T)_p^{\loz} \circ r$ on $X\times X^\smalllozenge$.
So, (ii) completes the proof of the first statement.

The second assertion follows from Proposition \ref{MainFpM}(ii).

\item[{\rm(iv)}] Let $T:X\multimap X^\smalllozenge$ be monotone and $\gra(T)=\{\Phi^p_T\leq \pi_p\}$. By Proposition \ref{MainFpM}(ii), we have $\gra(T)=\{\Phi^p_T=\pi_p\}$. So, we can write:
\begin{align*}
\big(\big(\Phi^p_T+\iota_{\{\Phi^p_T\leq \pi_p\}}\big)_p^{\loz}&\circ r\big)(x,x^{\loz})=\big(\Phi^p_T+\iota_{\{\Phi^p_T\leq \pi_p\}}\big)_p^{\loz}(x^{\loz},x)\\
&=\sup_{(y,y^{\loz})\in X \times X^\smalllozenge}\big\{\langle y^{\loz} ,\overrightarrow{px}\rangle+\langle x^{\loz} ,\overrightarrow{py}\rangle-\big(\Phi^p_T+\iota_{\gra(T)}\big)(y,y^{\loz})\big\}\\
&=\sup_{(y,y^{\loz})\in\gra(T)}\big\{\langle y^{\loz} ,\overrightarrow{px}\rangle+\langle x^{\loz} ,\overrightarrow{py}\rangle-\Phi^p_T(y,y^{\loz})\big\}\\
&=\sup_{(y,y^{\loz})\in \gra(T)}\big\{\langle y^{\loz} ,\overrightarrow{px}\rangle+\langle x^{\loz} ,\overrightarrow{py}\rangle-\langle y^\loz ,\overrightarrow{py}\rangle\big\}\\
&=\Phi^p_T(x,x^{\loz}).
\end{align*}
\item[{\rm(v)}] It is an immediate consequence of (iii) and Proposition \ref{MainFpM}(v).
\end{description}

\begin{proposition}\label{FTPhi}
Suppose that  $X\times\Ran(T)$ has the $\mathcal{F}_l$-property, where $T\in\mathfrak{M}(X)$. Then $\Phi^p_T \in \Gamma^{\,^\loz}_{p}(X)$. In particular, if $\,X$ is a flat  Hadamard space, then $\Phi^p_T \in \Gamma^{\,^\loz}_{p}(X)$.
\end{proposition}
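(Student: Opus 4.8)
The plan is to unpack the definition of $\Gamma^{\,^\loz}_{p}(X)$ and check its two defining requirements separately, drawing each from the propositions already established in this section. Membership $\Phi^p_T\in\Gamma^{\,^\loz}_{p}(X)$ demands, first, that $\Phi^p_T\in\Gamma(X\times X^\smalllozenge)$, i.e. that $\Phi^p_T$ is proper, l.s.c. and convex, and second, that the self-referential identity $\Phi^p_T=\big(\Phi^p_T+\iota_{\{\Phi^p_T\leq\pi_p\}}\big)_p^{\loz}\circ r$ holds on all of $X\times X^\smalllozenge$. I would verify these in turn and then combine them.

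For the first requirement I would argue as follows. Since $T\in\mathfrak{M}(X)$ is maximal monotone, its graph is nonempty: were $\gra(T)=\emptyset$ we would have $\gra(T)^\mu=X\times X^\smalllozenge\neq\gra(T)$, contradicting maximality. Hence $\Dom(T)\neq\emptyset$, and Remark \ref{ntsfitz}(iv) supplies properness and lower semicontinuity of $\Phi^p_T$. Convexity is then immediate from the standing hypothesis that $X\times\Ran(T)$ has the $\mathcal{F}_l$-property, which is exactly the assumption of Proposition \ref{fitzflconvex}(iii); that proposition yields convexity of $\Phi^p_T$. Together these give $\Phi^p_T\in\Gamma(X\times X^\smalllozenge)$.

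For the second requirement I would invoke the maximality of $T$ once more. Proposition \ref{MainFpM}(v) gives $\{\Phi^p_T=\pi_p\}=\{\Phi^p_T\leq\pi_p\}=\gra(T)$. Setting $C:=\{\Phi^p_T\leq\pi_p\}$, these equalities are precisely the hypotheses of Proposition \ref{phitx111}(v), which then delivers $\Phi^p_T=\big(\Phi^p_T+\iota_{C}\big)_p^{\loz}\circ r$. Substituting $C=\{\Phi^p_T\leq\pi_p\}$ back in reproduces the required identity, so the fixed-point condition defining $\Gamma^{\,^\loz}_{p}(X)$ holds. Combining this with the first step concludes $\Phi^p_T\in\Gamma^{\,^\loz}_{p}(X)$.

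Finally, the \emph{in particular} clause follows with no extra work: by Theorem \ref{frtg}, flatness of $X$ forces every subset of $X\times X^\smalllozenge$ to have the $\mathcal{F}_l$-property, in particular $X\times\Ran(T)$, so the hypothesis of the main assertion is automatically satisfied and the conclusion applies. I do not expect any genuine analytic obstacle here; the entire difficulty has been front-loaded into Propositions \ref{fitzflconvex}, \ref{MainFpM} and \ref{phitx111}, and the only real content of this proof is recognizing that the hypotheses of those three results are simultaneously met under the $\mathcal{F}_l$-property assumption together with maximal monotonicity of $T$.
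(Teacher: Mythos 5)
Your proof is correct and takes essentially the same route as the paper: Remark \ref{ntsfitz}(iv) plus Proposition \ref{fitzflconvex}(iii) give $\Phi^p_T\in\Gamma(X\times X^\smalllozenge)$, Proposition \ref{phitx111}(v) (whose hypotheses you verify via Proposition \ref{MainFpM}(v)) gives the conjugacy identity, and Theorem \ref{frtg} (equivalently, Proposition \ref{fitzflconvex}(iv), which the paper cites) handles the flat case. Your added observation that maximality forces $\gra(T)\neq\emptyset$, so that properness in Remark \ref{ntsfitz}(iv) actually applies, is a small detail the paper leaves implicit.
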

\begin{proof} Using Remark \ref{ntsfitz}(iv) and Proposition \ref{fitzflconvex}(iii), we obtain $\Phi^p_T \in \Gamma(X \times X^\loz)$.
Furthermore, it follows from Proposition \ref{phitx111}(v) that \[\Phi^p_T(x,x^{\loz})=\big((\Phi^p_T+\iota_{\{\Phi^p_T\leq \pi_p\}})_p^{\loz}\circ r\big)(x,x^{\loz}),\] for each $(x,x^{\loz}) \in X \times X^\smalllozenge$.
Thus $\Phi^p_T \in \Gamma^{\,^\loz}_{p}(X)$. Finally, the second assertion is an easy consequence of the first assertion and Proposition \ref{fitzflconvex}(iv).
\end{proof}

\begin{proposition}\label{Prop-Shp}  Associate to any mapping $h: X \times X^\smalllozenge \rightarrow \overline{\mathbb{R}}$, the set-valued map $S_{h,p}\!:\!X \multimap X^\smalllozenge$ defined by
\begin{equation}\label{Sh1}
S_{h,p} (x):=\{x^\loz \in X^\smalllozenge: h(x,x^{\loz})=\pi_p(x,x^{\loz})\}.
\end{equation}
If $h\in \Gamma^{\,^\loz}_{p}(X)$, then $\Phi^p_{S_{h,p}}=h$.\end{proposition}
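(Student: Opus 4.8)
The plan is to reduce the whole statement to the defining identity of $\Gamma^{\,^\loz}_{p}(X)$, namely $h=\big(h+\iota_{\{h \leq \pi_p\}}\big)_{p}^{\loz}\circ r$, by identifying $\gra(S_{h,p})$ with the sublevel set $\{h\leq\pi_p\}$ and then recognizing $\Phi^p_{S_{h,p}}$ as precisely the $p$-Fenchel conjugate appearing in that identity.

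First I would record that, directly from \eqref{Sh1}, $\gra(S_{h,p})=\{h=\pi_p\}$. Since $h\in\Gamma^{\,^\loz}_{p}(X)$, Proposition \ref{FenchelYoungIneqth}(iii) yields $\{h\geq\pi_p\}=X\times X^\smalllozenge$, i.e. $h\geq\pi_p$ everywhere; hence the equality set and the sublevel set coincide, $\{h=\pi_p\}=\{h\leq\pi_p\}$, so that $\gra(S_{h,p})=\{h\leq\pi_p\}$. I expect this identification, together with the nonemptiness below, to be the only genuinely delicate point, as the remainder is a substitution. Next I would verify $\gra(S_{h,p})\neq\emptyset$: if $\{h\leq\pi_p\}$ were empty, then $h+\iota_{\{h\leq\pi_p\}}\equiv\infty$, so its $p$-Fenchel conjugate would be identically $-\infty$, and the defining identity of $\Gamma^{\,^\loz}_{p}(X)$ would force $h\equiv-\infty$, contradicting the properness of $h$. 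This secures the hypothesis needed to invoke Proposition \ref{phitx111}(i).

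With these two facts in hand, I would apply Proposition \ref{phitx111}(i) to $T:=S_{h,p}$, giving $\Phi^p_{S_{h,p}}=\big(\iota_{\gra(S_{h,p})}+\pi_p\big)_p^{\loz}\circ r$. The crux is then the pointwise identity of functions
\[\iota_{\gra(S_{h,p})}+\pi_p=h+\iota_{\{h\leq\pi_p\}}.\]
Indeed, off $\{h\leq\pi_p\}$ both sides equal $\infty$, while on $\{h\leq\pi_p\}=\{h=\pi_p\}$ the indicators vanish and $\pi_p=h$, so both sides equal $h$. Substituting this identity gives $\Phi^p_{S_{h,p}}=\big(h+\iota_{\{h\leq\pi_p\}}\big)_p^{\loz}\circ r$, which is exactly $h$ by the membership $h\in\Gamma^{\,^\loz}_{p}(X)$, completing the proof.

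If I preferred to avoid citing Proposition \ref{phitx111}(i), I would unfold $\Phi^p_{S_{h,p}}$ directly from Definition \ref{fitzdef}, using the additivity $\langle y^\loz,\overrightarrow{xy}\rangle=-\langle y^\loz,\overrightarrow{px}\rangle+\langle y^\loz,\overrightarrow{py}\rangle$ to rewrite the supremand as $\langle x^\loz,\overrightarrow{py}\rangle+\langle y^\loz,\overrightarrow{px}\rangle-\pi_p(y,y^\loz)$, then replace $\pi_p(y,y^\loz)$ by $h(y,y^\loz)$ on the index set $\gra(S_{h,p})=\{h=\pi_p\}$, and finally enlarge the index set to all of $X\times X^\smalllozenge$ via the indicator $\iota_{\{h\leq\pi_p\}}$; this recovers the same $p$-Fenchel conjugate and the same conclusion.
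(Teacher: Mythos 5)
Your proposal is correct, and your primary argument takes a genuinely different (more modular) route than the paper, while your closing ``alternative route'' is in fact exactly the paper's own proof. The paper establishes the identity by one direct computation: it unfolds $\Phi^p_{S_{h,p}}$ from Definition \ref{fitzdef}, rewrites the supremand as $\langle y^{\loz},\overrightarrow{px}\rangle+\langle x^{\loz},\overrightarrow{py}\rangle-\langle y^{\loz},\overrightarrow{py}\rangle$, replaces $\pi_p(y,y^\loz)$ by $h(y,y^\loz)$ on the index set $\gra(S_{h,p})=\{h=\pi_p\}$, enlarges the index set first to $\{h\leq\pi_p\}$ (justified by Proposition \ref{FenchelYoungIneqth}(iii)) and then to all of $X\times X^\smalllozenge$ via the indicator, and finally applies the defining identity \eqref{Gamma-Loz-p}. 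Your main argument instead reuses Proposition \ref{phitx111}(i) as a black box and reduces the problem to the pointwise identity $\iota_{\gra(S_{h,p})}+\pi_p=h+\iota_{\{h\leq\pi_p\}}$, which you verify correctly. What this buys is brevity and reuse of an existing lemma rather than repeating its computation; what it costs is that Proposition \ref{phitx111}(i) is stated under the hypothesis $\gra(T)\neq\emptyset$, so you must check nonemptiness separately --- which you do, correctly, by noting that emptiness of $\{h\leq\pi_p\}$ would make the conjugate identically $-\infty$ and hence force $h\equiv-\infty$ against properness. The paper's direct computation quietly sidesteps this issue, since with the convention $\sup\emptyset=-\infty$ its chain of equalities is formally valid even in the empty case. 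Both routes ultimately rest on the same two facts: $\{h=\pi_p\}=\{h\leq\pi_p\}$, coming from Proposition \ref{FenchelYoungIneqth}(iii), and the membership $h\in\Gamma^{\,^\loz}_{p}(X)$.
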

\begin{proof}
From \eqref{Sh1}, Propositions \ref{FenchelYoungIneqth}(iii) and \eqref{Gamma-Loz-p} we obtain:
\begin{align*}
\Phi^p_{S_{h,p}}(x,x^{\loz})&=\sup_{(y,y^{\loz})\in \gra (S_{h,p})}\big\{\langle y^{\loz} ,\overrightarrow{px}\rangle+\langle x^{\loz} ,\overrightarrow{py}\rangle-\langle y^{\loz} ,\overrightarrow{py}\rangle\big\}\\
&=\sup_{(y,y^{\loz})\in \{h=\pi_p\}}\big\{\langle y^{\loz} ,\overrightarrow{px}\rangle+\langle x^{\loz} ,\overrightarrow{py}\rangle-h(y,y^{\loz})\big\}\\
&=\sup_{(y,y^{\loz})\in \{h\leq \pi_p\}}\big\{\langle y^{\loz} ,\overrightarrow{px}\rangle+\langle x^{\loz} ,\overrightarrow{py}\rangle-h(y,y^{\loz})\big\}\\
&=\sup_{(y, y^{\loz})\in X \times X^\smalllozenge}\big\{\langle y^{\loz} ,\overrightarrow{px}\rangle+\langle x^{\loz} ,\overrightarrow{py}\rangle-\big(h+\iota_{\{h \leq \pi_p\}}\big)(y,y^{\loz})\big\}\\
&=\big(h+\iota_{\{h \leq \pi_p\}}\big)_p^{\loz}(x^{\loz},x)\\
&=\big(\big(h+\iota_{\{h \leq \pi_p\}}\big)_p^{\loz}\circ r\big)(x,x^{\loz})\\
&=h(x,x^{\loz});
\end{align*}
i.e., $\Phi^p_{S_{h,p}}=h$.
\end{proof}
\begin{theorem}\label{Prop-Shp-02}
Let $h\in \Gamma^{\,^\loz}_{p}(X)$. Then $S_{h,p}\in\mathfrak{M}(X)$.
\end{theorem}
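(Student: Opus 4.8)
The plan is to reduce the statement to the converse maximal-monotonicity criterion furnished by Proposition \ref{MainFpM}(vi). That proposition certifies that an operator $T$ is maximal monotone as soon as the two set-identities $\{\Phi^p_T=\pi_p\}=\gra(T)$ and $\{\Phi^p_T\geq\pi_p\}=X\times X^\smalllozenge$ hold. Accordingly, I would take $T:=S_{h,p}$ and verify these two conditions for its $p$-Fitzpatrick transform.

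The cornerstone is the self-representation already in hand: since $h\in\Gamma^{\,^\loz}_{p}(X)$, Proposition \ref{Prop-Shp} gives $\Phi^p_{S_{h,p}}=h$. This single equality collapses both required conditions into statements about $h$ alone. As an incidental but necessary by-product, because $h$ is proper whereas $\Phi^p_{S_{h,p}}\equiv-\infty$ would hold if $\gra(S_{h,p})=\emptyset$ (Remark \ref{ntsfitz}(i)), the identity $\Phi^p_{S_{h,p}}=h$ forces $\gra(S_{h,p})\neq\emptyset$, so that $S_{h,p}$ is a genuine operator and Proposition \ref{MainFpM}(vi) is applicable.

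For the first identity I would simply unwind the definition \eqref{Sh1}: by construction $\gra(S_{h,p})=\{(x,x^\loz):h(x,x^\loz)=\pi_p(x,x^\loz)\}=\{h=\pi_p\}$. Substituting $\Phi^p_{S_{h,p}}=h$ then yields $\{\Phi^p_{S_{h,p}}=\pi_p\}=\{h=\pi_p\}=\gra(S_{h,p})$. For the second identity, the same equality combined with Proposition \ref{FenchelYoungIneqth}(iii) — which asserts $\{h\geq\pi_p\}=X\times X^\smalllozenge$ for every element of $\Gamma^{\,^\loz}_{p}(X)$ — gives at once $\{\Phi^p_{S_{h,p}}\geq\pi_p\}=\{h\geq\pi_p\}=X\times X^\smalllozenge$.

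With both hypotheses of Proposition \ref{MainFpM}(vi) confirmed, I conclude that $S_{h,p}$ is maximal monotone, i.e.\ $S_{h,p}\in\mathfrak{M}(X)$. I do not anticipate a genuine obstacle: the argument is essentially a bookkeeping assembly of Propositions \ref{Prop-Shp}, \ref{FenchelYoungIneqth}(iii) and \ref{MainFpM}(vi). The only points demanding a little care are the implicit nonemptiness of $\gra(S_{h,p})$ (handled above through properness of $h$) and the observation that the equality $\{\Phi^p_{S_{h,p}}=\pi_p\}=\gra(S_{h,p})$ is obtained directly from the definition of $S_{h,p}$, so that monotonicity of $S_{h,p}$ need not be assumed in advance but is instead a consequence of the criterion.
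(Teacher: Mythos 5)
Your proposal is correct, but it is routed differently from the paper's own proof. The paper argues in two stages: first it proves monotonicity of $S_{h,p}$ by a direct computation --- for $(x,x^\loz),(y,y^\loz)\in\gra(S_{h,p})$ it expands $\langle x^\loz-y^\loz,\overrightarrow{yx}\rangle$ as $h(x,x^\loz)+h(y,y^\loz)-\langle x^\loz,\overrightarrow{py}\rangle-\langle y^\loz,\overrightarrow{px}\rangle$, rewrites this via the defining identity of $\Gamma^{\,^\loz}_{p}(X)$ as $\big(h+\iota_{\{h\leq\pi_p\}}\big)(x,x^\loz)+\big(h+\iota_{\{h\leq\pi_p\}}\big)_p^{\loz}(y^\loz,y)-\langle x^\loz,\overrightarrow{py}\rangle-\langle y^\loz,\overrightarrow{px}\rangle$, and invokes the $p$-Fenchel--Young inequality (Proposition \ref{FenchelYoungIneqth}(i)) to conclude nonnegativity; second, it proves maximality by taking $(x,x^\loz)\in\big(\gra(S_{h,p})\big)^\mu$ and chaining Proposition \ref{MainFpM}(i), Proposition \ref{Prop-Shp} and Proposition \ref{FenchelYoungIneqth}(iii) to force $h(x,x^\loz)=\pi_p(x,x^\loz)$. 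You instead verify the two hypotheses of the packaged converse criterion, Proposition \ref{MainFpM}(vi), using only Proposition \ref{Prop-Shp}, the definition \eqref{Sh1}, and Proposition \ref{FenchelYoungIneqth}(iii); since (vi) does not presuppose monotonicity, you get monotonicity and maximality in one stroke and never need the Fenchel--Young inequality in the form (i). What each approach buys: yours is shorter and makes transparent that the theorem is a formal consequence of the self-representation $\Phi^p_{S_{h,p}}=h$; the paper's is more self-contained in that it only leans on the polar identity (i) rather than the full criterion (vi), and it exhibits the actual mechanism (Fenchel--Young) by which pairs in $\{h=\pi_p\}$ are monotonically related --- indeed the paper's maximality step is essentially the proof of (vi) unwound. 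Your side remarks (nonemptiness of $\gra(S_{h,p})$ from properness of $h$, and $\{\Phi^p_{S_{h,p}}=\pi_p\}=\gra(S_{h,p})$ straight from the definition) are both correct and harmless.
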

\begin{proof}
 Let $h\in \Gamma^{\,^\loz}_{p}(X)$ and let $(x,x^{\loz}), (y,y^{\loz}) \in \gra(S_{h,p} )$. So, $(x,x^{\loz}), (y,y^{\loz})\in \{h = \pi_p\}$ and so by using  \eqref{Sh1} and Proposition \ref{FenchelYoungIneqth}(i) we get:
\begin{align*}
\langle x^{\loz}-y^{\loz} ,\overrightarrow{yx}\rangle &=\langle x^{\loz}-y^{\loz} ,\overrightarrow{px}-\overrightarrow{py}\rangle \\
&=\langle x^{\loz} ,\overrightarrow{px}\rangle+\langle y^{\loz} ,\overrightarrow{py}\rangle-\langle x^{\loz} ,\overrightarrow{py}\rangle-\langle y^{\loz} ,\overrightarrow{px}\rangle\\
&=h(x,x^{\loz})+h(y,y^{\loz})-\langle x^{\loz} ,\overrightarrow{py}\rangle-\langle y^{\loz} ,\overrightarrow{px}\rangle\\
&=\big(h+\iota_{\{h \leq \pi_p\}}\big)(x,x^{\loz})\!+\!\big(h+\iota_{\{h \leq \pi_p\}}\big)_p^{\loz}(y^{\loz},y)\!-\!\langle x^{\loz} ,\overrightarrow{py}\rangle\!-\!\langle y^{\loz} ,\overrightarrow{px}\rangle\\
&\geq 0.
\end{align*}
Hence $S_{h,p}$ is monotone.

Now, let $(x,x^{\loz}) \in \big(\gra(S_{h,p})\big)^\mu$.
 Propositions \ref{MainFpM}(i),  \ref{Prop-Shp} and  \ref{FenchelYoungIneqth}(iii) imply that
\begin{align*}
\pi_p(x, x^{\loz})\geq \Phi^p_{S_{h,p}}(x,x^{\loz})=h(x,x^{\loz})\geq \pi_p(x, x^{\loz}).  &
\end{align*}
Thus $h(x,x^{\loz})=\pi_p(x, x^{\loz})$, which means that $(x,x^{\loz})\in \gra(S_{h,p})$ and so  $S_{h,p}$ is maximal.
\end{proof}

\begin{theorem}\label{Prop-Shp-03}
Let $T\in\mathfrak{M}(X)$ be such that $X\times\Ran(T)$ has the $\mathcal{F}_l$-property. Then there exists a unique $h \in \Gamma^{\,^\loz}_{p}(X) $ for which  $ T=S_{h,p}$.
\end{theorem}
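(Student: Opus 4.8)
The plan is to construct the required function explicitly as the $p$-Fitzpatrick transform of $T$, and then read off both existence and uniqueness by chaining together the three immediately preceding results. No genuine analytic work remains; the substance was already carried out in Propositions \ref{FTPhi}, \ref{MainFpM}(v) and \ref{Prop-Shp}, so the task here is to observe that these dovetail perfectly.

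For \textbf{existence}, I would take $h:=\Phi^p_T$ as the candidate. Since $T\in\mathfrak{M}(X)$ and $X\times\Ran(T)$ has the $\mathcal{F}_l$-property, Proposition \ref{FTPhi} gives at once that $\Phi^p_T\in\Gamma^{\,^\loz}_{p}(X)$, so $h$ lies in the correct class. To verify $T=S_{h,p}$ it then suffices to compare graphs: directly from the definition \eqref{Sh1},
\[
\gra(S_{h,p})=\{(x,x^\loz)\in X\times X^\smalllozenge:h(x,x^\loz)=\pi_p(x,x^\loz)\}=\{\Phi^p_T=\pi_p\}.
\]
Because $T$ is maximal monotone, Proposition \ref{MainFpM}(v) identifies $\{\Phi^p_T=\pi_p\}$ with $\gra(T)$, whence $\gra(S_{h,p})=\gra(T)$ and therefore $T=S_{h,p}$.

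For \textbf{uniqueness}, I would suppose $h_1,h_2\in\Gamma^{\,^\loz}_{p}(X)$ both satisfy $T=S_{h_1,p}=S_{h_2,p}$. Applying Proposition \ref{Prop-Shp} to each $h_i$ yields the inversion formula $\Phi^p_{S_{h_i,p}}=h_i$. Since $S_{h_1,p}=T=S_{h_2,p}$, taking $p$-Fitzpatrick transforms gives
\[
h_1=\Phi^p_{S_{h_1,p}}=\Phi^p_T=\Phi^p_{S_{h_2,p}}=h_2,
\]
so the representing function is unique (and equals $\Phi^p_T$).

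As noted, there is no serious obstacle: the argument is essentially bookkeeping. The only point demanding attention is recognizing the exact roles of the three ingredients—Proposition \ref{FTPhi} supplies the witness $\Phi^p_T$ inside $\Gamma^{\,^\loz}_{p}(X)$, Proposition \ref{MainFpM}(v) certifies that $S_{\Phi^p_T,p}$ recovers $T$ on the nose, and the inversion in Proposition \ref{Prop-Shp} forces uniqueness by showing that $S_{(\cdot),p}$ is injective on $\Gamma^{\,^\loz}_{p}(X)$. The $\mathcal{F}_l$-hypothesis is used only through Proposition \ref{FTPhi}, i.e. to guarantee convexity and lower semicontinuity of $\Phi^p_T$ so that membership in $\Gamma^{\,^\loz}_{p}(X)$ holds.
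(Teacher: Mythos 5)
Your proposal is correct and follows exactly the paper's own argument: take $h:=\Phi^p_T$, invoke Proposition \ref{FTPhi} for membership in $\Gamma^{\,^\loz}_{p}(X)$, identify $\gra(S_{h,p})=\{\Phi^p_T=\pi_p\}=\gra(T)$ via Proposition \ref{MainFpM}(v), and derive uniqueness from the inversion formula of Proposition \ref{Prop-Shp}. There is nothing to add; the two proofs coincide step for step.
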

\begin{proof}
Set $h:=\Phi^p_T$. It follows from  Proposition \ref{FTPhi} that $h \in \Gamma^{\,^\loz}_{p}(X) $. By Proposition \ref{MainFpM}(v) we have:
\begin{align*}
\gra(S_{h, p})&=\gra(S_{\Phi^p_T,p})\\
&=\{(x, x^\loz)\in X\times X^\smalllozenge:\Phi^p_T(x,x^\loz)=\pi_p(x,x^\loz) \}\\
&=\gra(T).
\end{align*}
 Finally, for the uniqueness, let $h_1,h_2 \in \Gamma^{\,^\loz}_{p}(X)$ be such that $T={S_{h_1,p}}={S_{h_2,p}}$. Using Proposition \ref{Prop-Shp}, we get $h_1=\Phi^p_{S_{h_1,p}}=\Phi^p_{S_{h_2,p}}=h_2$.
\end{proof}

\begin{corollary}\label{Cor01} Let $X$ be a flat Hadamard space. Then $T:X \multimap X^\smalllozenge$ is maximal monotone if and only if there exists a unique $h \in \Gamma^{\,^\loz}_{p}(X) $ such that  $ T=S_{h,p}$.
\end{corollary}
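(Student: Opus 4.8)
The plan is to read off both implications directly from the machinery already assembled, with flatness serving only to discharge the $\mathcal{F}_l$-property hypothesis that appears in the preceding results.

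For the forward implication, I would suppose $T:X\multimap X^\smalllozenge$ is maximal monotone and invoke Theorem~\ref{frtg}: since $X$ is flat, statement (iii) there guarantees that \emph{every} subset of $X\times X^\smalllozenge$ enjoys the $\mathcal{F}_l$-property; in particular the set $X\times\Ran(T)$ does. This is precisely the standing hypothesis of Theorem~\ref{Prop-Shp-03}, so that theorem applies verbatim and yields a unique $h\in\Gamma^{\,^\loz}_{p}(X)$ with $T=S_{h,p}$.

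For the converse, I would assume there exists (a unique) $h\in\Gamma^{\,^\loz}_{p}(X)$ with $T=S_{h,p}$. Then Theorem~\ref{Prop-Shp-02} immediately gives $S_{h,p}\in\mathfrak{M}(X)$, i.e.\ $T$ is maximal monotone; no appeal to flatness is needed in this direction.

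The main (and essentially only) obstacle is conceptual rather than computational: one must recognize that flatness is exactly the condition needed to drop the $\mathcal{F}_l$-property assumption from Theorem~\ref{Prop-Shp-03}, via the equivalence in Theorem~\ref{frtg}. Uniqueness demands no separate argument, since it is already built into Theorem~\ref{Prop-Shp-03} (and could alternatively be recovered from Proposition~\ref{Prop-Shp}, because $h=\Phi^p_{S_{h,p}}$ forces any two representers of $T$ to coincide). Thus the corollary is a clean packaging of Theorems~\ref{frtg}, \ref{Prop-Shp-02} and \ref{Prop-Shp-03}.
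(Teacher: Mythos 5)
Your proposal is correct and matches the paper's own argument, which simply cites Theorems \ref{Prop-Shp-02} and \ref{Prop-Shp-03} for the two directions. The only difference is presentational: you make explicit the step the paper leaves implicit, namely that flatness discharges the $\mathcal{F}_l$-property hypothesis of Theorem \ref{Prop-Shp-03} via Theorem \ref{frtg}.
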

 \begin{proof}
 The assertion follows from Theorems \ref{Prop-Shp-02} and  \ref{Prop-Shp-03}.
 \end{proof}

\begin{corollary}\label{Cor02}
In a flat Hadamard space $X$, there is a one-to-one correspondence between $\mathfrak{M}(X)$ and $ \Gamma^{\,^\loz}_{p}(X)$.
\end{corollary}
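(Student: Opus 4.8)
The plan is to realize the correspondence explicitly as the map $\Psi\colon\Gamma^{\,^\loz}_{p}(X)\to\mathfrak{M}(X)$, $\Psi(h):=S_{h,p}$, and to show that its inverse is the assignment $T\mapsto\Phi^p_T$. Almost all of the analytic content has already been established in the earlier results, so the task reduces to checking that both assignments are well defined and then to assembling the two one-sided inverse identities.

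First I would verify well-definedness. For $\Psi$, Theorem \ref{Prop-Shp-02} already guarantees that $S_{h,p}\in\mathfrak{M}(X)$ whenever $h\in\Gamma^{\,^\loz}_{p}(X)$, and this requires no flatness. For the candidate inverse, I would use flatness: since $X$ is flat, Theorem \ref{frtg} ensures that every subset of $X\times X^\smalllozenge$ enjoys the $\mathcal{F}_l$-property, and in particular $X\times\Ran(T)$ does for each $T\in\mathfrak{M}(X)$. Proposition \ref{FTPhi} then yields $\Phi^p_T\in\Gamma^{\,^\loz}_{p}(X)$, so $T\mapsto\Phi^p_T$ indeed lands in $\Gamma^{\,^\loz}_{p}(X)$.

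Next I would establish that these two assignments are two-sided inverses. The identity $\Phi^p_{S_{h,p}}=h$ for every $h\in\Gamma^{\,^\loz}_{p}(X)$ is precisely Proposition \ref{Prop-Shp}, which shows that $T\mapsto\Phi^p_T$ is a left inverse of $\Psi$. The reverse identity $S_{\Phi^p_T,p}=T$ for every $T\in\mathfrak{M}(X)$ is the content of Theorem \ref{Prop-Shp-03}, whose $\mathcal{F}_l$-hypothesis is automatic here by flatness, as noted above; this shows $T\mapsto\Phi^p_T$ is also a right inverse. Consequently $\Psi$ is a bijection with inverse $T\mapsto\Phi^p_T$, which is the asserted one-to-one correspondence.

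I expect no genuine analytic obstacle, since the substance resides in Propositions \ref{Prop-Shp} and \ref{FTPhi} and Theorems \ref{Prop-Shp-02} and \ref{Prop-Shp-03}, all proved above. The one point that deserves care is the bookkeeping at the interface of the two sets, namely confirming that flatness upgrades the conditional $\mathcal{F}_l$-hypotheses of Proposition \ref{FTPhi} and Theorem \ref{Prop-Shp-03} to hold unconditionally via Theorem \ref{frtg}, so that both compositions are genuinely defined on all of $\mathfrak{M}(X)$ and all of $\Gamma^{\,^\loz}_{p}(X)$. Alternatively, one may read the claim directly off the preceding corollary: its uniqueness clause yields injectivity of $\Psi$ while its equivalence yields surjectivity, which is arguably the most economical route.
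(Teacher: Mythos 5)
Your proof is correct: every step is covered by results already established in the paper, and you invoke flatness at exactly the right point (Theorem \ref{frtg} together with Proposition \ref{FTPhi}) to guarantee that $T\mapsto\Phi^p_T$ lands in $\Gamma^{\,^\loz}_{p}(X)$ --- a well-definedness point the paper's own proof leaves implicit. The paper assembles the same correspondence from the other end: it takes $\Psi(T):=\Phi^p_T$ on $\mathfrak{M}(X)$ as the primary map, proves injectivity by Corollary \ref{maxFitz-Iden}, and proves surjectivity by Theorem \ref{Prop-Shp-02} plus Proposition \ref{Prop-Shp}, which is precisely your left-inverse identity $\Phi^p_{S_{h,p}}=h$. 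Where you genuinely differ is in the other half: you avoid Corollary \ref{maxFitz-Iden} entirely and instead use Theorem \ref{Prop-Shp-03} to supply the right-inverse identity $S_{\Phi^p_T,p}=T$. One small caution there: the literal statement of Theorem \ref{Prop-Shp-03} asserts only the existence of a unique $h\in\Gamma^{\,^\loz}_{p}(X)$ with $T=S_{h,p}$; identifying that $h$ as $\Phi^p_T$ takes one extra line (by Proposition \ref{Prop-Shp}, $\Phi^p_T=\Phi^p_{S_{h,p}}=h$) or an appeal to the theorem's proof, where $h:=\Phi^p_T$ is the constructed witness. Your closing alternative --- reading injectivity of $h\mapsto S_{h,p}$ off the uniqueness clause and surjectivity off the existence clause of the preceding corollary, with Theorem \ref{Prop-Shp-02} ensuring the map lands in $\mathfrak{M}(X)$ --- sidesteps even that and is indeed the most economical route. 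What your two-sided-inverse assembly buys is an explicit description of both directions of the bijection; what the paper's assembly buys is brevity, since Corollary \ref{maxFitz-Iden} settles injectivity in one stroke. Substantively, both arguments rest on the same core fact, Proposition \ref{MainFpM}(v).
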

\begin{proof}
Consider the mapping $\Psi:\mathfrak{M}(X)\rightarrow \Gamma^{\,^\loz}_{p}(X)$, defined by $\Psi(T)=\Phi^p_T$.
By Corollary \ref{maxFitz-Iden}, $\Psi$ is one-to-one. Now, consider $h\in \Gamma^{\,^\loz}_{p}(X)$. From Theorem \ref{Prop-Shp-02}, we have
$S_{h,p}\in\mathfrak{M}(X)$. Finally, Proposition \ref{Prop-Shp} implies that
$\Psi(S_{h,p})=\Phi^p_{S_{h,p}}=h$.
\end{proof}
\begin{remark}
Note that, using Remark \ref{Rem-Flat} about flat Hadamard spaces, one can prove that previous corollaries (\ref{Cor01} $\&$  \ref{Cor02}) are equivalent to the classical results about Fitzpatrick transforms in Hilbert spaces.
\end{remark}

\section{Conclusion}

In this paper, we have investigated Fenchel conjugate transform and Fenchel-Young inequality for extended real-valued functions on $X\times X^{\smalllozenge}$, where $X$ is a Hadamard space and $X^{\smalllozenge}$ is its linear dual space. For an arbitrary and fixed  $p\in X$, the $p$-Fitzpatrick transform for monotone set-valued operators from a Hadamard space to its linear dual space is introduced and their basic properties are examined. In addition, it is shown that the $p$-Fitzpatrick transform on closed and convex subsets of a Hilbert space with non-empty interior can be expressed in terms of the classical Fitzpatrick transform and in the special case $p=0$, they coincide. Moreover, for each $x\in X$, we have associated the set $S_{h,p} (x):=\{h(x,\cdot)=\pi_p(x,\cdot)\}$  to any extended real-valued function $h$ on $X \times X^\smalllozenge$. It is proved that $S_{h,p}$ is a  maximal monotone set-valued mapping from $X$ to $X^\smalllozenge$, which its $p$-Fitzpatrick transform represents $h$ provided that  
$h$ is a proper, l.s.c. and convex map which equals to $\big(h+\iota_{\{h \leq \pi_p\}}\big)_{p}^{\loz}\!\circ\! r \text{~on~} X\times X^\smalllozenge\!$. On the other hand, for each $T\in\mathfrak{M}(X)$  there exists a unique $h \in \Gamma^{\,^\loz}_{p}(X) $ for which  $ T=S_{h,p}$, whenever $X\times\Ran(T)$ has the $\mathcal{F}_l$-property.

For the future works, one can define resolvent of monotone operators and apply them to finding the common zero
of monotone operators and also common fixed point of nonexpansive mappings with applications in convex optimization.
Some of other ideas for the future works in this field are:
\begin{itemize}
\item Convex subdifferential theory in Hadamard spaces: including
maximality of the subdifferential (specially in non-flat Hadamard spaces), new
generalizations and applications of subdifferential sum rule.
\item Duality mappings and surjectivity results: generalizations of applications.
\item Cyclic and $n$-cyclic monotonicity.
\item Maximal monotonicity of bifunctions.
\item Fitzpatrick functions of bifunctions and related representations results.

\end{itemize}

\end{document}